\newtheorem{theorem}{Theorem}
\newtheorem{corollary}[theorem]{Corollary}
\newtheorem{definition}[theorem]{Definition}
\newtheorem{example}[theorem]{Example}
\newtheorem{lemma}[theorem]{Lemma}
\newtheorem{proposition}[theorem]{Proposition}
\newtheorem{remark}[theorem]{Remark}
\title{Duality results for Iterated Function Systems with a general family of branches}
\author{Jairo K. Mengue,\,\,\, Elismar R. Oliveira }
\begin{document}

\maketitle

\centerline{Instituto de Matem\'atica, UFRGS - Porto Alegre, Brasil}



\begin{abstract}
Given $X$, $Y$, $Z$ and $W,$ compact metric spaces we consider two iterated function systems $\{\tau_x: Z\to Z,\, x\in X\}$ and $\{\tau_y:W\to W,\, y\in Y\}$, where $\tau_x$ and $\tau_y$ are contractions. Let $\Pi(\cdot,\cdot,\tau)$ be the set of probabilities $\pi \in \mathcal{P}(X \times Y \times Z\times W)$  with $(X,Z)-$marginal being holonomic with respect to $\tau_x$ and $(Y,W)$-marginal being holonomic  with respect to $\tau_y$. Given $\mu \in \mathcal{P}(X)$ and $\nu \in \mathcal{P}(Y)$, let $\Pi(\mu,\nu,\tau)$ be the set of probabilities in $\Pi(\cdot,\cdot,\tau)$  having $X$-marginal $\mu$ and $Y-$marginal $\nu$. Let $H_\alpha(\pi)$ be  the relative entropy of $\pi$ with respect to $\alpha$ and $H_\beta(\pi)$ be  the relative entropy of $\pi$ with respect to $\beta$. Given a cost function $c\in C(X\times Y \times Z \times W)$, let $P_{\alpha,\beta}(c) = \sup_{\pi\in\Pi(\cdot,\cdot,\tau)} \int c\, d\pi + H_{\alpha}(\pi) +H_{\beta}(\pi)$. We will prove the duality equation:
\[\inf_{_{_{P_{_{\alpha,\beta}}(c -\varphi(x) -\psi(y))=0}}} \int \varphi(x)\,d\mu +\int \psi(y)\,d\nu = \sup_{_{_{\pi\in\Pi(\mu,\nu,\tau)}}} \int c\, d\pi + H_{\alpha}(\pi) +H_{\beta}(\pi).\]
In particular, if $Z$ and $W$ are single points and we drop the entropy, the equation above can be rewritten as the Kantorovich Duality for the compact spaces $X,Y$ and a continuous cost function $-c$.

\noindent
\textbf{Keywords:} Iterated function systems (IFS), entropy, pressure, duality \newline
\textbf{AMS subject classification (2010):} 37A30, \, 37A35,\, 37A50
\end{abstract}

\thispagestyle{empty}

\section{Introduction}

Let $X$ and $Z$ be compact metric spaces. For each $x\in X$ we associate a continuous map $\tau_x: Z \to Z$. We denote by $\Pi(\tau)$ the set of holonomic probabilities in $\mathcal{P}(X\times Z)$ that is, the probabilities $\pi$ satisfying the equation
\[\int g(\tau_x(z)) \, d\pi(x,z) = \int g(z)\, d\pi(x,z), \,\,\, \text{for any}\,\, g\in C(Z).\]

The holonomic constraint above appears in several contexts, not just in discrete dynamics and thermodynamic formalism. For instance, for Lagrangian mechanics, in the Aubry-Mather theory there is an analogous holonomic constraint $\int_{TM} v d\varphi  \; d\mu(x,v)=0, \,\forall \, \varphi \in C^1(M).$ In \cite{BG}, \cite{GO} the Mather measures are the measures that minimizes the integral
$\int_{TM} L(x,v) \; d\mu(x,v)$ among all the probabilities satisfying the holonomic constraint (usually called holonomic probabilities). The discrete version of the Lagrangian holonomic constraint is
$$\int_{TM} \varphi(x+v)  \; d\mu(x,v)=\int_{TM} \varphi(x)  \; d\mu(x,v),$$
what can be seen as the holonomic constraint for an IFS where $\tau_v x= x+v$.

Also, in \cite{PBern}, Section 6, a holonomic constraint is used to study the Monge transportation problem when the cost is the action associated to a Lagrangian function on a compact manifold.
See  \cite{BJ}, \cite{BarnDemElton} and \cite{Urba}, for general properties and some background on iterated function systems theory.

In the rest of this work, we assume that  the  maps $\{\tau_x\}_{x\in X}$ are uniform  contractions, that is, there exists some constant $0<\gamma<1$ such that
\begin{equation}\label{gama}
d(\tau_{x_1}(z_1),\tau_{x_2}(z_2))\leq \gamma [d(x_1,x_2)+d(z_1,z_2)],
\end{equation}
for any $x_i\in X$ and $z_i \in Z$.

For each point $x_0 \in X$ there is a unique fixed point $z(x_0) \in Z$ for $\tau_{x_0}$. Thus, the probability $\pi =\delta_{x_0,z(x_0)}$ is holonomic because
\[ \int g\, \delta_{x_0,z(x_0)} = g(z(x_0)) = g(\tau_{x_0}(z(x_0))) = \int g(\tau_x(z))\, \delta_{x_0,z(x_0)},\]
for $g\in C(Z)$.

The next constructions are inspired by \cite{BCLMS} and \cite{LMMS}. For a fixed $\alpha$ in $\mathcal{P}(X)$, with $\rm{supp}(\alpha)=X$ and a Lipschitz cost function $c(x,z)$  we define the operator $L_{c,\alpha} : C(Z) \to C(Z)$ by
\[L_{c,\alpha}(\psi) (z)= \int e^{c(x,z)}\psi(\tau_x(z))\, d\alpha(x).\]
Sometimes we write $L_{c,\alpha}$ as $L_c$.
With the assumption of $\tilde{\alpha}$ to be a finite measure instead of probability there are not relevant advances in our results. Indeed, when $d\tilde{\alpha}=kd\alpha,$ where $k$ is a positive constant, we have $L_{c,\tilde{\alpha}}=L_{c+\log(k),\alpha}$. For a countable or finite  set $X$, the usual measure $\tilde{\alpha}(x)$ represents the summation over the branches of the weighted IFS (see \cite{BJ}, \cite{OlivLop}, \cite{Urba}). Writing $\tilde{\alpha}(x)=\sum_{i} \delta_{x_{i}}$ we get the transference operator
\[L_{c,\tilde{\alpha}}(\psi) (z)= \sum_{i} e^{c(x_{i},z)}\psi(\tau_{x_{i}}(z)).\]

If  $X=\{1,2,3,4\}$ and $\alpha = (p_1,p_2,p_3,p_4)$ is a positive probability vector, then $L_{c,\alpha}h(z)$ became
$$\sum_{i=1}^{4} e^{c(x_i,z) + \log(p_i)}h(\tau_{x_i}(z)). $$ In this case, the choice of a different probability $\alpha$ only produces a perturbation on the cost function $c$.
When $X$ is not countable, the introduction of the probability $\alpha$ in the operator is natural because it allows us to get the existence of probabilities $\bar{\alpha}$ and $\alpha$ that are not equivalent. In this case, the choice of $\alpha$ is not absorbed by changes on $c$.

The Ruelle-Perron-Frobenius operators, has been studied by several authors in the last few years under different hypothesis on the weights $e^{c(x_{i},z)}$ and on the IFS. For example in \cite{STE} is proved, under suitable hypothesis, the uniqueness of the invariant measures for the operator $T(f)(x)=\int_{S} f(w_s(x))d\mu(s)$ where $\{(X,d), w_s :X\to X,  s \in S\}$ is an IFS  and $\mu$ is an ``\emph{a priori}" distribution of $s$.

Also, in \cite{Urba}, is studied a hyperbolic IFS  with countable many branches, $\{\phi_i :X \to X,  i \in I\}$. In this work, the thermodynamic formalism for the operator
$$\mathcal{L}(\psi)(x)=\sum_{i \in I} e^{\phi^{(i)}} \psi(\phi_{i}(x))$$ was considered, where $\{\phi^{i}\}$ is a family of H{\"o}lder maps of order $\beta$.  In this setting, the topological pressure $P(\phi)=\displaystyle \lim_{n \to \infty} \frac{1}{n} Z_{n}(\phi)$ is defined, where $Z_{n}(\phi)$ is a subadditive partition function associated. Finally, the author shows that the dual operator $\mathcal{L}^{*}$ acting in probabilities has an eigenmeasure with eigenvalue $\lambda= e^{P(\phi)}$ and, under the normalization of $\mathcal{L}^{*}$, there is an unique Gibbs state.

We will briefly show that some results about  XY model \cite{BCLMS} and Spin Lattice Systems \cite{LMMS} can be adapted to the present setting.  In the Chapter 2, we follow \cite{LMMS}  studying the operator $L_c$ and the dual operator $L_c^{*}$. These operators can be used in order to construct the definition of the relative entropy of a holonomic probability $\pi\in\Pi(\tau)$ with respect to $\alpha$ by
\[H_\alpha(\pi) = -\sup\left\{ \int c(x,z)\, d\pi\,\,|\,\, c \,\, \text{is Lipschitz and}\,\, L_c(1)=1\right\}. \]
Thus, the pressure of a continuous function $c$, relative to $\alpha$, is defined by
\[P_\alpha(c) = \sup_{\pi\in \Pi(\tau)}\left[ \int c\, d\pi +H_\alpha(\pi)\right].\]
The number $e^{P(c)}$ is equal to the spectral radius of $L_c$ if $c$ is Lipschitz (related results can be founded in \cite{OlivLop}, \cite{LMMS}). This way of define entropy and pressure is related with the Legendre's transform.

The entropy above defined is a natural generalization of the Kolmogorov-Sinai entropy for symbolic dynamics (see section 2). On the other hand we will present an example where this entropy is not an affine function on the space of holonomic probabilities. Moreover, studying this example, we can easily show that for any $c$, the holonomic measure that attain the supremum in $P(c)$ is not an extremal point of the convex set $\Pi(\tau)$.

From a transport theory point of view (see \cite{Vi1}), is natural try to impose some condition on the $X$-marginal of $\pi$.
For a fixed $\mu\in \mathcal{P}(X)$ we denote by $\Pi(\mu,\tau)$ the set of holonomic probabilities that also satisfy
\[\int f(x)\, d\pi(x,z) = \int f(x)\, d\mu\]
for any $f\in C(X)$, that is the set of holonomic probabilities with $X$-marginal equal to $\mu$.

In the chapter 3, we consider the problem of the marginals. In the general case, given $c\in C(X,Z)$, the possible measures that attain the supremum
\[\sup_{\pi\in \Pi(\tau)} \int c\, d\pi +H_\alpha(\pi)\]
may not be in $\Pi(\mu,\tau)$. So we can ask, what could be said about
\[\sup_{\pi\in \Pi(\mu,\tau)} \int c\, d\pi +H_\alpha(\pi).\]

In order to visualize the Kantorovich duality in the current setting we generalize this question fixing two other compact metric spaces $Y$ and $W$ and an IFS $\{\tau_y:W\to W, \, y\in Y\}$ formed by uniform contractions as $\{\tau_x\}$ above. We denote by $\Pi(\cdot,\cdot,\tau)$ the set of probabilities $\pi \in \mathcal{P}(X\times Y \times Z \times W)$ satisfying $\int f(\tau_x(z))\,d\pi = \int f(z)\, d\pi,\, f\in C(Z)$ and $\int g(\tau_y(w))\,d\pi=\int g(w)\,d\pi,\, g\in C(W).$  This is the set of probabilities with $(X,Z)$-marginal holonomic with respect to $\{\tau_x\}$ and $(Y,W)$-marginal holonomic with respect to $\{\tau_y\}$.
Given probabilities $\alpha \in \mathcal{P}(X)$ and $\beta \in \mathcal{P}(Y)$, with $\rm{supp}(\alpha)=X,\, \rm{supp}(\beta)=Y$, for $\pi \in \Pi(\cdot,\cdot,\tau)$
we denote by $H_{\alpha}(\pi)$ the relative entropy of the holonomic $(X,Z)$-marginal of $\pi$ with respect to $\alpha$. Analogously we denote by $H_{\beta}(\pi)$ the relative entropy of the $(Y,W)$-marginal of $\pi$  with respect to $\beta$. The marginal pressure of a continuous cost function $c(x,y,z,w)$ relative to $(\alpha,\beta)$ will be defined by
\[P_{(\alpha,\beta)}(c) = \sup_{\pi \in \Pi(\cdot,\cdot,\tau)}\int c\, d\pi +H_{\alpha}(\pi) + H_{\beta}(\pi).\]
Using ideas of transport theory (see \cite{Vi1}, \cite{LM}, \cite{LMMS2}), we fix two probabilities $\mu \in \mathcal{P}(X)$ and $\nu \in \mathcal{P}(Y)$ and denote by $\Pi(\mu,\nu,\tau)$ the set of probabilities $\pi\in \Pi(\cdot,\cdot,\tau)$ satisfying
$\int f(x)\, d\pi = \int f(x)\, d\mu(x), \,\,\, f\in C(X) $ and $
\int g(y)\, d\pi = \int g(y)\, d\nu(y), \,\,\, g\in C(Y).$
Applying the Fenchel-Rockafellar Duality Theorem we will prove in Theorem \ref{varprindual}  that
\[\inf_{ _{ _{P_{\alpha,\beta} (c -\varphi(x) -\psi(y))=0}}} \int \varphi(x)\,d\mu +\int \psi(y)\,d\nu = \sup_{ _{ _{\pi\in\Pi(\mu,\nu,\tau)}}} \int c\, d\pi + H_{\alpha}(\pi) +H_{\beta}(\pi),\]
where the possible functions $\varphi(x)$ and  $\psi(y)$ appearing in the left hand side are continuous.

When $Z$ and $W$ are single points and the entropy is dropped (which is the zero temperature case, in the spin lattice system)   $P_{\alpha,\beta}(c-\varphi - \psi)=0$ is equivalent to $\sup_{x,y} \{c(x,y)-\varphi(x) -\psi(y)\}=0$. In this case, the duality above  can be rewritten as the Kantorovich Duality for compact spaces $X$ and $Y$ and a continuous cost function $-c$:
 \[\inf_{c -\varphi(x) -\psi(y)\leq 0} \int \varphi(x)\,d\mu +\int \psi(y)\,d\nu = \sup_{\pi\in\Pi(\mu,\nu)} \int c\, d\pi .\]
 We present this result in Theorem \ref{kantorovich}.

On the other hand, if we suppose that $Y$ and $W$ have a single point, then the Theorem \ref{varprindual} can be rewritten as
\[\inf_{P(c-\varphi(x))=0}\int \varphi d\mu  = \sup_{\pi\in\Pi(\mu,\tau)} \int c\, d\pi + H_\alpha(\pi).\]
This result  can be interpreted as a kind of variational principle using the operator $L_{c,\alpha}$ where we change the concept of eigenvalue from number to a function on the variable $x$.

Although this particular result seems to recall the main result in \cite{LMMS2}, we have a different situation. In order to illustrate the differences, we present an example of application in Thermodynamic Formalism. Let $X=\{0,1\}$ and $Z=\{0,1\}^{\mathbb{N}}$ be compact metric spaces with the respective natural distances. Let $\tau_x(z_1,z_2,...) =(x,z_1,z_2,...)$ and for a fixed probability measure $\mu=(p_1,p_2)$  over $\{0,1\}$, consider the set of invariant probabilities $\pi$ for the shift map $\sigma$ acting on $Z=\{0,1\}^{\mathbb{N}}$ satisfying $\pi([0])=p_0$ and $\pi([1]) = p_1$, where $[0]$ and $[1]$ are the cylinder sets of size one. Denote this set by $\Pi(\mu)$ and consider the variational problem
\[ \sup_{\pi\in\Pi(\mu)} \int A\, d\pi + h(\pi)\]
where $A$ is a Lipschitz function and $h$ is the Kolmogorov entropy. In this case we are interested only on invariant probabilities $\pi$ such that $\pi([0])=p_0$. This supremum is equal to
\[\inf \{\varphi_0 p_0 + \varphi_1 p_1\}\]
where $( \varphi_0,\varphi_1)$ satisfies  $\sum_{i=1,2}e^{A(iz)-\varphi_i}g(iz) = g(z)$ for some positive and continuous function $g$ and any $z$.

\section{Entropy and pressure}
In this section we study the operator $L_c=L_{c,\alpha}$ and the dual operator $L_c^{*}$. We use this operator to define the entropy of a holonomic probability (see \cite{OlivLop}, \cite{LMMS} for related results). 

\begin{lemma}\label{exist} There exists a positive eigenfunction $h(z)$ associated to a positive eigenvalue $\lambda$ for $L_c$.
\end{lemma}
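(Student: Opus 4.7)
The plan is to split the argument into two parts: first find a positive eigenmeasure for the adjoint $L_c^{*}$ and thus identify the candidate eigenvalue $\lambda$, then construct the eigenfunction $h$ by a limit procedure controlled by the Lipschitz assumption on $c$ and the contraction of the IFS.

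For the first part, note that $L_c:C(Z)\to C(Z)$ is positive and bounded, so $L_c^{*}$ sends finite positive measures to finite positive measures. Consider the map
\[
\Phi:\mathcal{P}(Z)\to\mathcal{P}(Z),\qquad \Phi(\nu)=\frac{L_c^{*}\nu}{\int L_c(1)\,d\nu}.
\]
The denominator is bounded below by $e^{-\|c\|_\infty}>0$, and $\Phi$ is continuous for the weak-$*$ topology. Since $\mathcal{P}(Z)$ is a convex, weak-$*$ compact subset of $C(Z)^{*}$, the Schauder–Tychonoff fixed point theorem yields $\nu_0$ with $\Phi(\nu_0)=\nu_0$, i.e.\ $L_c^{*}\nu_0=\lambda\nu_0$ where $\lambda=\int L_c(1)\,d\nu_0>0$.

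For the second part, define $h_n=\lambda^{-n}L_c^{n}(1)\in C(Z)$. Then $h_n>0$ and $\int h_n\,d\nu_0=1$ for every $n$, because $L_c^{*n}\nu_0=\lambda^{n}\nu_0$. The key estimate is a Bowen-type Lipschitz bound: writing
\[
L_c^{n}(1)(z)=\int e^{\,c(x_1,z)+c(x_2,\tau_{x_1}z)+\cdots+c(x_n,\tau_{x_{n-1}}\!\cdots\tau_{x_1}z)}\,d\alpha^{n}(x_1,\ldots,x_n),
\]
and using that $c$ is $K$-Lipschitz while $\tau_{x_{k-1}}\!\cdots\tau_{x_1}$ contracts the $z$-variable by $\gamma^{k-1}$, one gets
\[
\bigl|\log L_c^{n}(1)(z)-\log L_c^{n}(1)(z')\bigr|\le \frac{K}{1-\gamma}\,d(z,z'),
\]
uniformly in $n$. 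Combined with $\int h_n\,d\nu_0=1$, this uniform log-Lipschitz estimate forces uniform two-sided bounds $e^{-M}\le h_n\le e^{M}$ (where $M=K\operatorname{diam}(Z)/(1-\gamma)$) and makes $\{h_n\}$ equicontinuous.

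To manufacture an eigenfunction from this relatively compact sequence, I would pass to Cesàro averages $\tilde h_n=\frac{1}{n}\sum_{k=0}^{n-1}h_k$. A direct computation using $L_c h_k=\lambda h_{k+1}$ gives
\[
L_c \tilde h_n-\lambda\tilde h_n=\frac{\lambda}{n}(h_n-1),
\]
whose right-hand side vanishes uniformly as $n\to\infty$. The $\tilde h_n$ inherit the same uniform bounds and equicontinuity, so by Arzelà–Ascoli a subsequence converges uniformly to some $h\in C(Z)$ with $e^{-M}\le h\le e^{M}$. Continuity of $L_c$ on $C(Z)$ then yields $L_c h=\lambda h$, and the lower bound guarantees $h>0$ everywhere. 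The main obstacle is precisely the compactness step: without equicontinuity one cannot extract a limit from $\{h_n\}$ or its averages, and the Lipschitz hypothesis on $c$ together with the uniform contraction $\gamma$ is exactly what is needed to obtain it; the Cesàro averaging then bridges the gap between $h_n$ and $h_{n+1}$ in the eigenvalue equation.
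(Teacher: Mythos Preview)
Your argument is correct, but it follows a different route from the paper. The paper never uses a dual eigenmeasure to pin down $\lambda$; instead it applies Bousch's discount trick: for each $0<s<1$ the operator $T_s(u)(z)=\log\int e^{c(x,z)+su(\tau_x z)}\,d\alpha(x)$ is an $s$-contraction on $(C(Z),|\cdot|_\infty)$, hence has a unique fixed point $u_s$. A direct estimate (essentially the same Birkhoff-sum bound you use) shows the $u_s$ are Lipschitz with a common constant, and a simple inequality gives $|(1-s)\max u_s|\le\|c\|_\infty$. One then passes to a subsequence $s_n\to 1$ along which $(1-s_n)\max u_{s_n}\to k$ and the normalized functions $u_{s_n}-\max u_{s_n}$ converge (Arzel\`a--Ascoli) to some $u$; the fixed-point equation becomes $L_c(e^u)=e^k e^u$, so $h=e^u$ and $\lambda=e^k$.

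The trade-offs are clear. The paper's approach is self-contained (no Schauder--Tychonoff) and produces eigenvalue and eigenfunction simultaneously from a single limit, at the cost of the somewhat artificial discount parameter. Your approach is more in the spirit of classical Ruelle--Perron--Frobenius theory: it separates the identification of $\lambda$ (via the eigenmeasure) from the construction of $h$, and the Ces\`aro averaging of the normalized iterates $\lambda^{-n}L_c^n(1)$ is a transparent way to force convergence to an eigenfunction. Both rely on exactly the same analytic core---the uniform log-Lipschitz bound coming from the Lipschitz cost and the contraction rate $\gamma$---and both invoke Arzel\`a--Ascoli at the end; they just package the passage to the limit differently.
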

\begin{proof}
This proof follows similar arguments to those found in \cite{bousch} (see also \cite{BCLMS}).
\end{proof}

Using that $\lambda$ and $h$ given above are positive, the function $$\overline{c}(x,z) = c(x,z)+\log(h(\tau_x(z)))-\log(h(z))-\log(\lambda)$$ is well defined. Clearly
$L_{\overline{c},\alpha}(1) = 1$ and we notice that $\overline{c}$ is a Lipschitz function. Indeed, given points $(x_1,z_1)$ and $(x_2,z_2)$, from (\ref{gama}) we have
\[|h(\tau_{x_1}(z_1))-h(\tau_{x_2}(z_2))| \leq C  d(\tau_{x_1}(z_1), \tau_{x_2}(z_2)) \leq C \gamma (d(x_1,x_2) +d(z_1,z_2)).\]
The conclusion follows from the fact  that $c$ and $h$ are Lipschitz functions, and $\log(\cdot)$ is an analytic function.

\begin{definition} A Lipschitz function $c=c(x,z)$ is normalized if $L_c(1)=1$.
\end{definition}

When we write, ``$c$ is normalized", or consider the operator $L_c$, we always suppose that $c$ is a Lipschitz function. Also, the function $\overline{c}$ is always a normalized function associated to $c$. The next results can be used in order to show that $\overline{c}$ is unique. Naturally $\int \overline{c}\, d\pi = \int c\, d\pi -\log(\lambda)$ for any holonomic probability $\pi$.

\bigskip

\begin{example} The present setting do not exclude the possibility of existence of more than one eigenfunction $h\geq 0$ with positive eigenvalue associated.

Consider $X=\{1\}$, $Z=[0,1]$, $\tau_x(z) =z/2 $ and $c=0$.
Then $\tau_x$ is a contraction map with $\gamma=1/2$.
\[L_{c,\alpha}(\psi)(z)=L(\psi)(z)= \psi(z/2).\]
From this, a continuous family of eigenfunctions $h_{\alpha}\geq 0$ associated to positive eigenvalues is
$h_\alpha(z)=z^\alpha, \alpha \neq 0  \,\,\,\,\text{and}\,\,\,\, h_0(z)= 1.$
Indeed
$L(h_\alpha)(z)= (z/2)^\alpha = \frac{1}{2^\alpha}h_\alpha(z), \,\,\, \text{and}\,\,\, L(h_0)(z)= 1= 1 \cdot h_0(z).$ If $\alpha \neq 0$, $h_\alpha(0)=0^\alpha=0$.
\end{example}

\bigskip

In the standard Thermodynamic Formalism one consider cases that are topologically mixing. Hence, if some eigenfunction $h\geq 0$ satisfies $h(x_0)=0$ at some point $x_0$ is easy to show that $h=0$. This result can be used in order to prove that the main eigenvalue $\lambda$ is simple (see \cite{PP}). The argument below shows that we can not exclude cases as the previous one.

\begin{lemma}
There exists a unique positive eigenvalue $\lambda$ associated to a strictly positive continuous eigenfunction for $L_{c,\alpha}$ and this $\lambda$ is equal to the spectral radius of $ L_{c,\alpha}$ over $C(Z)$. The eigenfunction $h$ associated to $\lambda$ is Lipschitz and unique except by multiplication by a  constant.
\end{lemma}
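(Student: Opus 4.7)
The plan is to use Lemma~\ref{exist} together with the normalization procedure introduced just after it, and then show that any positive continuous eigenfunction of the normalized operator must be constant. Pick a Lipschitz positive eigenfunction $h_0$ with eigenvalue $\lambda_0>0$ from Lemma~\ref{exist}, and let $\overline c$ denote the associated normalized cost, so that $L_{\overline c}(1)=1$. A direct substitution shows that if $L_c g=\mu g$ for some positive continuous $g$, then $\tilde g:=g/h_0$ is a positive continuous eigenfunction of $L_{\overline c}$ with eigenvalue $\mu/\lambda_0$. It therefore suffices to prove that every positive continuous eigenfunction of $L_{\overline c}$ is a constant with eigenvalue $1$; this will yield uniqueness of $\lambda$ and of $h$ up to a scalar, and Lipschitzness of $h$ since it must be a multiple of the Lipschitz $h_0$.

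So let $\tilde g>0$ be continuous with $L_{\overline c}\tilde g=\mu'\tilde g$, let $M=\tilde g(z^*)=\max \tilde g$, and $m=\min \tilde g$. Evaluating the eigenequation at $z^*$, positivity together with $\tilde g\leq M$ and $L_{\overline c}(1)=1$ gives $\mu' M\leq M$; the analogous lower bound at the minimum gives $\mu' m\geq m$, so $\mu'=1$. The resulting identity
\[
\int e^{\overline c(x,z^*)}\bigl(M-\tilde g(\tau_x z^*)\bigr)\,d\alpha(x)=0
\]
has a nonnegative continuous integrand, so $\operatorname{supp}(\alpha)=X$ forces $\tilde g(\tau_x z^*)=M$ for every $x\in X$. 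Iterating from each new maximum point, $\tilde g\equiv M$ on the entire forward orbit $\mathcal O(z^*)=\{\tau_{x_n}\circ\cdots\circ\tau_{x_1}(z^*):n\geq 1,\,(x_i)\in X^n\}$.

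To extend constancy from $\mathcal O(z^*)$ to all of $Z$, the uniform contraction yields
\[
d\bigl(\tau_{x_n}\circ\cdots\circ\tau_{x_1}(z),\,\tau_{x_n}\circ\cdots\circ\tau_{x_1}(z^*)\bigr)\leq \gamma^n\,\mathrm{diam}(Z)\to 0,
\]
uniformly in $(x_1,\ldots,x_n)\in X^n$. By uniform continuity of $\tilde g$ and $\tilde g\equiv M$ on $\mathcal O(z^*)$, one has $\tilde g(\tau_{x_n}\circ\cdots\circ\tau_{x_1}(z))\to M$ uniformly in $(x_1,\ldots,x_n)$. Passing to the limit in the iterated identity $\tilde g(z)=L_{\overline c}^n\tilde g(z)$ and using $L_{\overline c}^n(1)=1$ to absorb the exponential weights, I obtain $\tilde g(z)=M$ for every $z\in Z$. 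For the spectral radius, $L_c^n h_0=\lambda_0^n h_0$ gives $\lambda_0\leq r(L_c)$, while for any $f\in C(Z)$ with $\|f\|_\infty\leq 1$, positivity of $L_c$ combined with $|f|\leq h_0/\min h_0$ gives $\|L_c^n f\|_\infty\leq \lambda_0^n(\max h_0)/(\min h_0)$, hence $r(L_c)\leq \lambda_0$.

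The main obstacle is the third step: passing from constancy on the forward orbit to constancy on all of $Z$. Without the uniform contraction hypothesis one only obtains $\tilde g\equiv M$ on the attractor of the IFS, which need not coincide with $Z$; the exponential shrinkage bound $\gamma^n\,\mathrm{diam}(Z)\to 0$ is precisely what drives the extension by a uniform continuity argument.
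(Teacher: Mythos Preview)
Your proof is correct and follows essentially the same approach as the paper: reduce to the normalized operator $L_{\overline c}$, show any positive continuous eigenfunction has eigenvalue $1$ by evaluating at the max and min points, and then use the uniform contraction to force constancy. The only cosmetic differences are that the paper argues the constancy by contradiction at the minimum point (showing $h(\tau_{x_n}\cdots\tau_{x_1}z_0)>\min h$ $\alpha^n$-a.e.\ while simultaneously $=\min h$), whereas you pass to the limit in $\tilde g=L_{\overline c}^n\tilde g$; and the paper establishes the spectral radius for $L_{\overline c}$ via $|L_{\overline c}u|_\infty\leq |u|_\infty$ and translates back, whereas you bound $L_c^n$ directly through $|f|\leq h_0/\min h_0$.
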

\begin{proof}
From the Lemma~\ref{exist} there exists $\tilde{\lambda}> 0$ and  $\tilde{h}> 0$ such that $L_{c,\alpha}(\tilde h) = \tilde{\lambda} \,\tilde{h}$. The function $\overline{c}(x,z) = c(x,z)+\log(\tilde{h}(\tau_x(z)))-\log(\tilde{h}(z))-\log(\tilde{\lambda})$ is normalized and for any pair $\lambda_0$ and $h_0$ we have that
$L_{\overline{c},\alpha}h_0 = \lambda_0h_0$ iff $L_{c,\alpha}(h_0\cdot\tilde{h}) = (\lambda_0\cdot \tilde{\lambda})(h_0\cdot\tilde{h}).$
Therefore it is sufficient to prove our claim for  $\overline{c}$.

Hence, we need first to show that $\lambda = 1$ is the unique positive eigenvalue that can be associated to a positive eigenfunction. In order to do that we suppose that $L_{\overline{c},\alpha}$ has a positive eigenvalue $\lambda_2$ associated to a strictly positive continuous eigenfunction $h_2$. We want to show that $\lambda_2=1$. Let $h_2(z_0)=\min\{h_2(z)\}$ and $h_2(z_1)=\max\{h_2(z)\}$. Then
\[\lambda_2 h_2(z_1) = \int e^{\overline{c}(x,z)}h_2(\tau_x(z_1))\,d\alpha \leq \int e^{\overline{c}(x,z)} h_2(z_1)\,d\alpha= h_2(z_1)\]
(this shows that $\lambda_2\leq 1$)
and
\[\lambda_2 h_2(z_0) = \int e^{\overline{c}(x,z)} h_2(\tau_x(z_0))\,d\alpha \geq \int e^{\overline{c}(x,z)} h_2(z_0)\,d\alpha= h_2(z_0)\]
(this shows that $\lambda_2 \geq 1$).

In order to show that $1$ is equal to the spectral radius of $L_{\overline{c}}$ we notice that if $|u|_{\infty}=\sup_z\{|u(z)|\} \leq 1$ then
$|L_{\overline{c}}(u)|_{\infty} \leq \int e^{\overline{c}(x,z)}|u|_{\infty}d\alpha(x) = |u|_{\infty}.$
Thus
$ |L^{n}_{\overline{c}}(u)|_{\infty}^{1/n} \leq 1.$

Now we will show that the constant function is the unique eigenfunction for the normalized operator associated to the eigenvalue 1. In order to show this we suppose that there exists an eigenfunction $h$ non constant. We can suppose $h\geq 0$ (because $h+ cte$ will be also an eigenfunction). Let $h(z_0)=\min\{h\}$, $h(z_1)=\max\{h\}$ and $0<\epsilon<h(z_1)-h(z_0)$.
Writing $L_{\overline{c},\alpha}^{n}=L_{\overline{c},\alpha}\circ L_{\overline{c},\alpha}^{n-1}, \text{ for } n=2,3,...$ we have
\begin{align*}
  h(z_1) &=L_{\overline{c},\alpha}^n(h)(z_1) \\
   &  =  \int_{X^{n}}  e^{^{^{\overline{c}(x_n, \tau_{x_{n-1}}...\tau_{x_1}z_1)}}} \cdot \cdot \cdot e^{^{^{\overline{c}(x_2,\tau_{x_1}z_1)}}}e^{^{^{\overline{c}(x_1,z_1)}}}
   h(\tau_{x_n}...\tau_{x_1}z_1) \, d\alpha^{n}(x_1,...,x_n).
\end{align*}
This show that $\max\{h\}=h(\tau_{x_n}...\tau_{x_1}z_1)$ for $\alpha^{n}$-a.e. $(x_1,...,x_n)$. From our hypothesis on $\tau$ (see equation \eqref{gama}), we have
$$d(\tau_{x_n}...\tau_{x_1}z_1,\tau_{x_n}...\tau_{x_1}z_0)<\gamma^{n}diam(Z).$$
The function $h$ is uniformly continuous (because $Z$ is compact), then for a sufficiently large $n$ we have $ |h(\tau_{x_n}...\tau_{x_1}z_1)-h(\tau_{x_n}...\tau_{x_1}z_0)| < \varepsilon.$  Consequently
$$h(\tau_{x_n}...\tau_{x_1}z_0)> \max\{h\}-\epsilon> \min\{h\}=h(z_0),\,\,\, \alpha^{n}\text{-a.e.} \,(x_1,...,x_n).$$ This is a contradiction because from
\[h(z_0) =  \int_{X^{n}} e^{c(x_n, \tau_{x_{n-1}}...\tau_{x_1}z_0)}...e^{c(x_2,\tau_{x_1}z_0)}e^{c(x_1,z_0)}h(\tau_{x_n}...\tau_{x_1}z_0)\, d\alpha^{n}(x_1,...,x_n),\]
we conclude that $h(\tau_{x_n}...\tau_{x_1}z_0)=h(z_0)$, $\alpha^{n}$-a.e. $(x_1,...,x_n)$.

\end{proof}

From the above lemma there is only one way of associate a normalized
function. $\overline{c}$ to a Lipschitz function $c(x,z)$ by adding  a constant and a continuous function (that is also Lipschitz) in the form $g(\tau_x(z))-g(z)$.

\bigskip

If $L_{c,\alpha}(1)=1$, the dual operator $L_{c,\alpha}^{*}$ (denoted also by $L^{*}$) acting over probabilities in  $\mathcal{P}(Z)$ is defined by
\[\int \psi(z)\, dL^{*}(P) = \int L(\psi) \, dP.\]
Let $\rho$ be an invariant probability for the dual operator. We want to prove the uniqueness of $\rho$. In the next lemma $|u|_{\infty}$ is the supremum norm of a Lipschitz function $u$ and $|u|_{lip}$ is the Lipschitz constant of $u$.

\begin{lemma} Suppose that $c$ is normalized and $u$ is a Lipschitz function. There is a constant $C$ that does not depend on $u$ satisfying
\[|L_c^{n}u|_{lip} \leq C|u|_{\infty} +\gamma^{n}|u|_{lip}.\]
\end{lemma}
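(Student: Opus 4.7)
The plan is to first establish the one-step estimate
\[
|L_c u|_{lip} \;\leq\; \gamma |u|_{lip} \;+\; K |u|_{\infty}
\]
for some constant $K$ depending only on $c$ and $\gamma$, and then iterate.

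For the one-step estimate, given $z_1,z_2\in Z$ I would split
\[
L_c u(z_1)-L_c u(z_2) = \int\! e^{c(x,z_1)}\bigl[u(\tau_x z_1)-u(\tau_x z_2)\bigr]\,d\alpha(x) + \int \bigl[e^{c(x,z_1)}-e^{c(x,z_2)}\bigr] u(\tau_x z_2)\,d\alpha(x).
\]
For the first integral, the uniform contraction $d(\tau_x z_1,\tau_x z_2)\leq \gamma d(z_1,z_2)$ together with the normalization $\int e^{c(x,z_1)}\,d\alpha(x)=1$ gives a bound $\gamma|u|_{lip}\,d(z_1,z_2)$. For the second, using that $c$ is Lipschitz (assumed whenever $L_c$ is considered), the mean value theorem for $e^{(\cdot)}$ yields $|e^{c(x,z_1)}-e^{c(x,z_2)}| \leq e^{|c|_\infty} |c|_{lip}\, d(z_1,z_2)$, so this piece is bounded by $K |u|_\infty d(z_1,z_2)$ with $K = e^{|c|_\infty}|c|_{lip}$.

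Having $|L_c u|_{lip} \leq \gamma |u|_{lip} + K |u|_\infty$, I would then use the trivial consequence of normalization $L_c(1)=1$ that $|L_c u|_\infty \leq |u|_\infty$, so by induction $|L_c^{k} u|_\infty \leq |u|_\infty$ for every $k$. Applying the one-step inequality to $L_c^{n-1}u$ and iterating gives
\[
|L_c^n u|_{lip} \leq \gamma^n |u|_{lip} + K|u|_\infty\bigl(1+\gamma+\cdots+\gamma^{n-1}\bigr) \leq \gamma^n |u|_{lip} + \frac{K}{1-\gamma}|u|_\infty,
\]
and the desired constant is $C=K/(1-\gamma)=e^{|c|_\infty}|c|_{lip}/(1-\gamma)$, which clearly does not depend on $u$.

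There is no serious obstacle here; the only thing one has to be careful about is that the telescoping/splitting producing the factor $K|u|_\infty$ really uses the \emph{pointwise} normalization $\int e^{c(x,z)}d\alpha(x)=1$ (at $z=z_1$) rather than a global sup bound, which is what keeps the contraction constant exactly $\gamma$ and not something larger that would spoil the geometric decay.
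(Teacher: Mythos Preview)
Your proof is correct and follows essentially the same approach as the paper: a one-step Lipschitz estimate obtained by splitting $L_c u(z_1)-L_c u(z_2)$ into a ``contraction'' piece controlled by $\gamma|u|_{lip}$ (using $L_c(1)=1$) and a ``coefficient variation'' piece controlled by $|u|_\infty$, followed by iteration using $|L_c^k u|_\infty\leq |u|_\infty$ and summing the resulting geometric series. The only cosmetic differences are that the paper writes the constant as $|e^{c}|_{lip}$ rather than bounding it further by $e^{|c|_\infty}|c|_{lip}$, and organizes the induction by applying the hypothesis to $L_c u$ instead of applying the one-step bound to $L_c^{n-1}u$; both routes give $C=|e^{c}|_{lip}/(1-\gamma)$ (or your slightly larger $e^{|c|_\infty}|c|_{lip}/(1-\gamma)$).
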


\begin{proof}
The proof follows from an argument similar to the proof of prop. 2.1  in \cite{PP}.
\end{proof}

For each sequence $x_1,x_2...,$ of elements in $X$ and any points $z_1$, $z_2$ in $Z$ we have
\[d(\tau_{x_n}...\tau_{x_1}z_1,\tau_{x_n}...\tau_{x_1}z_2) < \gamma^{n}diam(Z).\]
Let $\hat Z$ be the set of points $z\in Z$ satisfying that there exists elements $x_1,x_2,...$ in $X$ and $z_0\in Z$ such that $z$ is an accumulation point of $\{\tau_{x_n}...\tau_{x_1}z_0\}_{n\geq 1}$. From the above computation the point $z_0$ is not relevant, but $x_1,x_2,...$.
If for each $k$ there exists a sequence $x_1^{k},...,x_k^{k}$ and a point $z_k\in Z$ such that $d(z, \tau_{x_k^{k}}...\tau_{x_1^{k}}z_k)< 1/k$ then $z\in \hat Z$ because is an accumulation point for the sequence $\{x_1^{1},x_{1}^{2},x_2^{2},x_1^3,x_2^3,x_3^3,... \}$.

\begin{proposition} There is a unique probability $\rho\in \mathcal{P}(Z)$ invariant under the action of the dual operator $L_{c}^{*}$. The support of $\rho$ is a subset of $ \hat Z$ and for any Lipschitz function $u:Z\to\mathbb{R}$ we have
\[L_{c,\alpha}^{n}(u) \to \int u\, d\rho\]
uniformly in $Z$.
\end{proposition}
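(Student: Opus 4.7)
My plan has four steps. First, existence is routine: since $L_c(1)=1$, the dual $L_c^{*}$ sends the weak-$*$ compact convex set $\mathcal P(Z)$ continuously into itself, so the Markov--Kakutani fixed point theorem supplies at least one fixed $\rho$.

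The core of the proof is to show that, for every Lipschitz $u$, $L_c^{n}u$ converges uniformly to a constant. By the previous lemma $\{L_c^{n}u\}_{n\ge 0}$ is equi-Lipschitz and uniformly bounded, hence relatively compact in $C(Z)$ by Arzel\`a--Ascoli. Positivity and $L_c 1=1$ make $M_n:=\max L_c^{n}u$ non-increasing and $m_n:=\min L_c^{n}u$ non-decreasing, with limits $M\ge m$; I aim for $M=m$. Take any uniform subsequential limit $h$ of $\{L_c^{n}u\}$; continuity of $L_c$ makes every iterate $L_c^{k}h$ another uniform subsequential limit of the same sequence, so $\max L_c^{k}h=M$ and $\min L_c^{k}h=m$ for every $k\ge 0$. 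Since $L_c 1=1$, the expansion
\[
L_c^{k}h(z)=\int_{X^{k}}\prod_{j=1}^{k}e^{c(x_j,\,\tau_{x_{j-1}}\!\circ\cdots\circ\tau_{x_1}(z))}\, h\bigl(\tau_{x_k}\!\circ\cdots\circ\tau_{x_1}(z)\bigr)\, d\alpha^{k}(x_1,\dots,x_k)
\]
is a probability-weighted average of $h$ along the trajectories. Evaluating at $z_k^{*}\in\arg\max L_c^{k}h$ and using $h\le M$ forces $h(\tau_{x_k}\!\circ\cdots\circ\tau_{x_1}(z_k^{*}))=M$ for $\alpha^{k}$-a.e.\ word $(x_1,\dots,x_k)$, and hence for \emph{every} such word by continuity of $h$ and $\mathrm{supp}(\alpha)=X$. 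The symmetric argument at $z_k^{**}\in\arg\min L_c^{k}h$ gives $h(\tau_{x_k}\!\circ\cdots\circ\tau_{x_1}(z_k^{**}))=m$ for every word. Joint contractivity yields $d(\tau_{x_k}\!\circ\cdots\circ\tau_{x_1}(z_k^{*}),\tau_{x_k}\!\circ\cdots\circ\tau_{x_1}(z_k^{**}))\le\gamma^{k}\operatorname{diam}(Z)\to 0$, and uniform continuity of $h$ collapses $|M-m|$ to zero. Every subsequential limit therefore equals the same constant $\mu(u):=M$, so $L_c^{n}u\to\mu(u)$ uniformly.

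Identification, uniqueness and support follow quickly. For any fixed $\rho$, $\int u\,d\rho=\int L_c^{n}u\,d\rho\to\mu(u)$, so $\mu(u)=\int u\,d\rho$ is independent of the choice of fixed $\rho$ for every Lipschitz $u$, and hence, by Lipschitz density in $C(Z)$, for every continuous $u$; therefore $\rho$ is unique. For the support, the analogous expansion
\[
(L_c^{*})^{n}\sigma=\int_{Z}\int_{X^{n}}\prod_{j=1}^{n}e^{c(x_j,\,\tau_{x_{j-1}}\!\circ\cdots\circ\tau_{x_1}(z))}\,\delta_{\tau_{x_n}\circ\cdots\circ\tau_{x_1}(z)}\,d\alpha^{n}(x)\,d\sigma(z)
\]
shows $(L_c^{*})^{n}\sigma$ is carried by the closed set $A_n:=\overline{\{\tau_{x_n}\!\circ\cdots\circ\tau_{x_1}(z):x_i\in X,\,z\in Z\}}$. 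Applied to $\rho=(L_c^{*})^{n}\rho$ for every $n$ this yields $\mathrm{supp}(\rho)\subset\bigcap_n A_n$, and the diagonal-sequence construction used in the paragraph preceding the statement identifies $\bigcap_n A_n$ with $\hat Z$.

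The main obstacle is the collapse $M=m$ in the second step: it cannot be run on a subsequential limit $h$ alone (which need not be $L_c$-invariant) but must be performed on the iterates $L_c^{k}h$, and it uses simultaneously the normalization of $c$, the full support of $\alpha$ (to lift ``$\alpha^{k}$-a.e.\ word'' to ``every word''), and the joint contractivity of composed branches of the IFS.
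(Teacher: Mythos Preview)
Your proof is correct and tracks the paper's argument closely: Arzel\`a--Ascoli compactness of the iterates, monotonicity of $\max/\min$ under $L_c$, and the contraction argument forcing every subsequential limit to be constant, followed by identification of that constant as $\int u\,d\rho$ for any fixed $\rho$. The only noteworthy differences are cosmetic: you invoke Markov--Kakutani explicitly for existence and handle the support claim via the nested compact images $A_n=\tau_{X^n}(Z)$ together with the diagonal description of $\hat Z$, whereas the paper tests $\rho$ against a bump function supported away from $\hat Z$; your promotion of ``$\alpha^k$-a.e.\ word'' to ``every word'' using $\mathrm{supp}(\alpha)=X$ is valid but unnecessary, since the paper's contradiction already goes through on the intersection of two $\alpha^k$-full sets.
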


\begin{proof}
We will show first that, for any possible fixed probability $\rho$,  $\rm{supp}(\rho) \subseteq \hat Z$.
Consider a point $z_1 \in Z-\hat Z$ then there exists $\epsilon>0$ and $n_0$ such that, for any $z_0\in Z$ and $x_1,...,x_n$ in $X$ with $n\geq n_0$
$d(z_1, \tau_{x_n}...\tau_{x_1}z_0) > \epsilon.$
Let $u:Z\to [0,+\infty)$ be a continuous function satisfying  $u=1$ in $B(z_1, \epsilon/2)$ and $u=0$ out of $B(z_1,\epsilon)$. Then, for $n\geq n_0$ we have the equality
$ \int u\, d\rho = \int L_{c}^{n}u \, d\rho=$
\[ = \int_Z \int_{X^{n}} e^{c(x_n, \tau_{x_{n-1}}...\tau_{x_1}z)}...e^{c(x_2,\tau_{x_1}z)}e^{c(x_1,z)}u(\tau_{x_n}...\tau_{x_1}z)\, d\alpha^{n}(x_1,...,x_n)d\rho(z)=0.\]
Therefore $z_1 \notin \rm{supp}(\rho)$.

Now we will proof the other claims in the proposition.
Consider a fixed Lipschitz function $u:Z\to [0,\infty)$. From the above lemma
$|L_c^n(u)|_{lip} \leq C|u|_{\infty} +\gamma^{n}|u|_{lip}$
and we also have
$|L_c^n(u)|_{\infty} \leq |u|_{\infty}.$
Thus $\{L_c^{n}u\}$ is an equicontinuous family. From the Arzela-Ascolli theorem there is a uniformly convergent sub-sequence $L_c^{n_i}u \to w$.
As $u\geq 0$, we have $w\geq 0$. From the inequalities $\sup\{u\} \geq \sup\{L_c(u)\} \geq \sup\{L_c^{2}(u)\}...$  and $\inf\{u\} \leq \inf\{L_c(u)\} \leq \inf\{L_c^{2}(u)\}...$ we conclude that $\sup\{w\}= \sup\{L_c^{n}(w)\},$ for $\, n\geq 1$ and $\inf\{w\}= \inf\{L_c^{n}(w)\},$ for $\, n\geq 1$.

We claim that $w$ is a constant function.\newline
Indeed, suppose that
$\sup\{w\} - \inf\{w\} > \epsilon>0$. Let $\{z_n^{1}\}$ and $\{z_n^{2}\}$ be such that $\sup\{w\} = (L_c^{n}w)(z_n^1)$ and $\inf\{w\}=(L_c^{n}w)(z_n^2)$, then
\[
\sup\{w\} = \int_{X^{n}}e^{c(x_n,\tau_{x_{n-1}}...\tau_{x_1}z_n^1)+...+c(x_1,z_n^1)}w(\tau_{x_n}...\tau_{x_1}z_n^1) \,d\alpha^{n}(x_1,...,x_n).
\]
Therefore $w(\tau_{x_n}...\tau_{x_1}z_n^1)=\sup\{w\}$, $\alpha^{n}$-a.e. $(x_1,...,x_n)$.
From the  hypothesis on $\tau$ (see equation \eqref{gama}) we have
$$d(\tau_{x_n}...\tau_{x_1}z_n^1,\tau_{x_n}...\tau_{x_1}z_n^2)<\gamma^{n}diam(Z).$$
The function $w$ is uniformly continuous, then for $n$ large enough we have  $ |w(\tau_{x_n}...\tau_{x_1}z_n^1)-w(\tau_{x_n}...\tau_{x_1}z_n^2)| < \epsilon.$  Consequently
$$w(\tau_{x_n}...\tau_{x_1}z_n^2)> \sup\{w\}-\epsilon> \inf\{w\},\,\,\, \alpha^{n}\text{-a.e.} \,(x_1,...,x_n).$$ This is a contradiction because, from
\[\inf\{w\} =   \int_{X^{n}}e^{c(x_n,\tau_{x_{n-1}}...\tau_{x_1}z_n^2)+...+c(x_1,z_n^2)}w(\tau_{x_n}...\tau_{x_1}z_n^2) \,d\alpha^{n}(x_1,...,x_n)\]
we can also conclude that $w(\tau_{x_n}...\tau_{x_1}z_n^2)=\inf\{w\}$, $\alpha^{n}$-a.e. $(x_1,...,x_n)$. Thus $w$ is constant.

Let $\rho$ be an invariant probability for $L_c^*$. Then $\rm{supp}(\rho) \subseteq \hat Z$ and
\[\int u \, d\rho = \lim_{i\to\infty} \int L_c^{n_i}u\, d\rho = \int w\, d\rho = w.\]
Using this equation and the above statement we can conclude that these arguments can be applied for any sub-sequence proving that the sequence $L_c^{n}u $ converges uniformly to $\int\,u\,d\rho$. Considering any possible Lipschitz function $u$ we obtain that $\rho$ is unique and satisfy
\[\int u\, d\rho = \lim_{n\to\infty}(L_c^{n}u),\]
for any  Lipschitz  function $u$.

\end{proof}

The probability $\pi \in \mathcal{P}(X\times Z)$ defined by
\begin{equation}\label{holonomic}
\int g(x,z)\, d\pi = \int \int e^{c(x,z)}g(x,z)\, d\alpha(x)d\rho(z)
\end{equation}
is holonomic.
Indeed, if $g\in C(Z)$ we have
\begin{align*}
 \int g \, d\pi & =\int g\, d\rho =  \int L(g)\, d\rho \\
    & = \int \int e^{c(x,z)}g(\tau_x(z))\,d\alpha(x) d\rho(z)\\
    &= \int g(\tau_x(z))\, d\pi.
\end{align*}

We denote $\pi$ the holonomic probability associated to the pair $c$ and $\alpha$. If $c$ is not normalized then there is a unique normalized function $\overline{c}$ associated to $c$ (following the above discussion) and the holonomic probability associated to $c$ will be (by convention) the holonomic probability associated to $\overline{c}$ .

Following \cite{LMMS} we define the relative entropy of a holonomic probability $\pi\in \mathcal{P}(X\times Z)$ with respect to the probability $\alpha\in \mathcal{P}(X)$ by
\begin{align*}
H_{\alpha}(\pi) &= -\sup_{L_{c,\alpha}1=1} \int c(x,z)\, d\pi(x,z)\\
&=-\sup\left\{ \int c(x,z)\, d\pi(x,z)\,\, | \,\, c\,\, \text{is Lipschitz and } L_{c,\alpha}1 =1\right\}.
\end{align*}

The pressure of a {continuous} function $c$ with respect to $\alpha$ is defined by
\[P_{\alpha}(c) = \sup_{\pi \in \Pi(\tau)} \int c\, d\pi + H_{\alpha}(\pi).\]

As a consequence of these definitions, part of our results and proofs below will be improvements of the ones that appear in \cite{LMMS}. The following properties are direct consequences of definitions.

\begin{proposition} Properties of entropy and pressure: \newline
1 - $H_\alpha$ is a concave function on $\Pi(\tau)$;\newline
2 - $H_\alpha$ is upper semi-continuous;\newline
3 - $H_\alpha\leq 0$ (because $c=0$ is normalized);\newline
4 - $P_\alpha$ is a convex function on $C(X,Z)$;\newline
5 - if $k$ is a constant, then $P(c+k) = P(c) +k$; \newline
6 - for any $g\in C(Z)$, $P(c(x,z) +g(z) - g(\tau_x(z))) = P(c)$;\newline
7 - The pressure is a continuous function on $(C(x,z), |\,|_{\infty})$, more precisely
$|P(c_1)-P(c_2)|\leq |c_1-c_2|_\infty$.
\end{proposition}

\vspace{0.5cm}

In these notes, is perfectly possible to change the sign of the relative entropy  defining
\[I_\alpha(\pi) = \sup_{L_{c,\alpha}1=1} \int c(x,z)\, d\pi(x,z)\]
and then
\[P_{\alpha}(c) = \sup_{\pi \in \Pi(\tau)} \int c\, d\pi - I_{\alpha}(\pi).\]
In this case the entropy will be non negative. See \cite{VE} for an interesting discussion about the relative entropy.

\bigskip

\begin{example} Suppose that $Z=\{z\}$ has a single point and $X=\{1,...,d\}$ is a finite set. Consider a fixed probability $\alpha=(p_1,p_2,...,p_d)$ where $p_i>0$ is the mass of the point $i\in X$.
Any probability $q=(q_1,...,q_d), \, q_i>0,$ over $X$ can be identified with a probability $\pi=\pi(q)=q$ over $X\times Z$. We have $\tau_x(z)=z$, $x=1,...,d$. We notice that $\pi=q$ is holonomic. Indeed, if $g$ is a function on the variable $z$ then $g$ is constant and $g(z)=g(\tau_x(z))$.
Any function $c(x,z)$ is identified with a function $c(x)$ and will be normalized if
\[\sum_{i=1}^{d}e^{c(i)}p_i =\sum_{i=1}^{d}e^{c(i)+\log(p_i)}= 1.\]
We know that $c(x)= \log(\frac{q_x}{p_x})$ is normalized because
\[   \sum_{i=1}^{d}e^{\log(\frac{q_i}{p_i})+\log(p_i)} = \sum_{i=1}^{d}e^{\log(q_i)}= \sum_{i=1}^{d}q_i =1.\]
 From Jensen's inequality we have\footnote{ We will prove the first equality in the Corollary \ref{associated} because $q$ is the holonomic probability associated to the normalized function $\log(\frac{q_x}{p_x})$.}
\[I_{\alpha}(q)= \sum_{i=1}^{d} \log(\frac{q_i}{p_i})q_i =\sum_{i=1}^{d} \log(q_i)q_i-  \sum_{i=1}^{d} \log(p_i)q_i\geq 0.\]
This is the  Kullback-Leibler entropy of $q$ relative to $p$. Thus,
\[H_\alpha(q) = -\sum_{i=1}^{d} \log(\frac{q_i}{p_i})q_i =-\sum_{i=1}^{d} \log(q_i)q_i+  \sum_{i=1}^{d} \log(p_i)q_i\leq 0.\]
If $p_i=1/d$ then \[H_\alpha(q) = -\sum_{i=1}^{d} \log(q_i)q_i -\log(d) = h(q) -\log(d)\]
where $h(q)$ is the Shannon entropy of $q$.
\end{example}

\bigskip

\begin{example} Consider $X=\{1,2\}$, $Z=\{1,2\}^{\mathbb{N}}$ and $\tau_x(z_1,z_2,...)=(x,z_1,z_2,...)$. Then the IFS is defined by the inverse branches of the shift map $\sigma: Z \to Z$ given by $\sigma(z_1,z_2,...) = (z_2,z_3,...)$. In this case $X\times Z$ can be identified with $Z$ using the map
\[(x,(z_1,z_2,z_3,...))\to(x,z_1,z_2,z_3,...)\] and the holonomic measures in $X\times Z$ coincide with the $\sigma$-invariant measures in $Z$, if we follow this identification. Given $\alpha = (1/2,1/2)$ we have
\begin{align*}
  L_{c,\alpha}(h)(z_1,z_2,...) & = \sum_{i=1,2} e^{c(i,(z_1,z_2,...))}h(\tau_i(z_1,z_2,...))\frac{1}{2}\\
   & \sum_{i=1,2} e^{c(i,z_1,z_2,...))-\log(2)}h((i,z_1,z_2,...)).
\end{align*}
This is the Ruelle Operator associated to the function $c-\log(2)$ in the Thermodynamic Formalism.
The Kolmogorov-Sinai entropy of an invariant measure $\mu\in P(Z)$ satisfies:
$$h(\mu) = -\sup\left\{ \int A\, d\mu \,\,|\,\, \sum_{i=1,2}e^{A(ix)}=1,\,\, A\,\,\text{Lipschitz} \right\}.$$
Then we have $H_{\alpha}(\mu) = h(\mu) - \log(2)$.  If we take a finite measure $\tilde{\alpha}(x)=\sum_{i} \delta_{x_{i}}$ instead of $\alpha=(1/2,1/2)$, then $H_{\tilde{\alpha}}(\mu) = h(\mu)$.
\end{example}

\noindent
\begin{remark} We will show in corollary \ref{legendre} that
\[H_{\alpha}(\pi) = -\sup_{ c \,\text{continuous}} \left[\int c\, d\pi - P_\alpha(c)\right].\]
An analogous characterization in Dynamical Systems can be found in \cite{P} theorem 9.12.

\vspace{1cm}

Consider, for a Lipschitz function $c$, the operator $\hat L_{c}:C(X\times Z)\to C(Z)$ defined by
$$\hat L_c(g)(z) = \int e^{c(x,z)}g(x,z)\,d\alpha(x).$$
If $\pi$ is the holonomic probability associated to the normalized function $c$ then,
from ($\ref{holonomic}$) we obtain that for any $g\in C(X \times Z )$:
\begin{equation}\label{holonomic2}
\int g(x,z)\, d\pi = \int \int e^{c(x,z)}g(x,z)\, d\alpha(x)d\rho(z)=\int \hat L_c(g) d\rho(z)=\int \hat L_c(g) d\pi(x,z)
\end{equation}
because $\hat L_c(g) \in C(z)$ and the $z$-marginal of $\pi$ is $\rho$.
\end{remark}

\begin{lemma} \label{lema entropia}
 Given a normalized function $c_0$ with associated holonomic probability $\pi_0$ we have that for any normalized function $c$,
$ \int c \,d\pi_0 \leq \int c_0 \,d\pi_0. $
\end{lemma}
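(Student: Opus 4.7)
The plan is to reduce the inequality to a pointwise-in-$z$ application of Jensen's inequality, using the characterization of $\pi_0$ given in equation (\ref{holonomic2}).

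First I would rewrite the difference of integrals using the key identity for the holonomic probability associated to the normalized function $c_0$. Applying (\ref{holonomic2}) with $g(x,z) = c(x,z) - c_0(x,z) \in C(X \times Z)$, we obtain
\[
\int (c - c_0)\, d\pi_0 \;=\; \int \hat L_{c_0}(c - c_0)\, d\pi_0 \;=\; \int \!\int e^{c_0(x,z)}\bigl(c(x,z)-c_0(x,z)\bigr)\, d\alpha(x)\, d\pi_0(x,z).
\]
So it suffices to show that, for every fixed $z \in Z$, the inner expression $\hat L_{c_0}(c-c_0)(z) = \int e^{c_0(x,z)}(c(x,z)-c_0(x,z))\, d\alpha(x)$ is nonpositive.

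Next I would observe that, since $c_0$ is normalized, for each fixed $z$ the measure $d\mu_z(x) := e^{c_0(x,z)}\, d\alpha(x)$ is a probability measure on $X$. Writing $c - c_0 = \log(e^{c-c_0})$ and applying Jensen's inequality to the concave function $\log$ with respect to $\mu_z$ yields
\[
\int (c-c_0)\, d\mu_z \;=\; \int \log(e^{c-c_0})\, d\mu_z \;\leq\; \log \int e^{c(x,z)-c_0(x,z)} e^{c_0(x,z)}\, d\alpha(x) \;=\; \log \int e^{c(x,z)}\, d\alpha(x).
\]
Because $c$ is also normalized, the right-hand side equals $\log 1 = 0$. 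Hence $\hat L_{c_0}(c-c_0)(z) \leq 0$ for every $z \in Z$.

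Finally, integrating this pointwise inequality against $\pi_0$ (equivalently, against its $Z$-marginal $\rho_0$) gives $\int (c - c_0)\, d\pi_0 \leq 0$, which is the claimed inequality. There is no real obstacle here; the only conceptual step is noticing that (\ref{holonomic2}) lets one replace integration against $\pi_0$ by integration of $\hat L_{c_0}$ applied to the integrand, which converts a global comparison between $c$ and $c_0$ into a $z$-by-$z$ comparison of normalized exponential densities on $X$, where Jensen applies directly.
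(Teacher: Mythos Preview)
Your proof is correct and follows essentially the same approach as the paper: both use identity~(\ref{holonomic2}) to rewrite $\int(c-c_0)\,d\pi_0$ via $\hat L_{c_0}$, then apply Jensen's inequality with respect to the probability $e^{c_0(\cdot,z)}\,d\alpha$ on $X$, using that $L_c(1)=1$ to conclude. Your presentation is slightly more direct, since the paper introduces $u=e^{c-c_0}$ and carries $\log(\hat L_{c_0}u)$ through the computation before applying Jensen, whereas you observe immediately that this term vanishes.
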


\noindent
\begin{proof}
Using that $e^{-c+c_0}$ is a positive function, we can write
$1=u(x,z)e^{-c(x,z)+c_0(x,z)},$ where $u=e^{c(x,z)-c_0(x,z)}$ is also positive.
Note that, in this case,
$ 1=\hat L_c1
 =\hat L_{c_0}u.$

Hence,
$$  0=\log \bigg(\frac{\hat L_c 1}{1}\bigg)= \log(\frac{\hat L_{c_0}u}{u(x,z)e^{-c(x,z)+c_0(x,z)}})= \log(\hat L_{c_0}u)-\log u+c-c_0,  $$
therefore, $ 0 = \int \log( \hat L_{c_0}u)\,d\pi_0-\int \log u \,d\pi_0 + \int c \,d\pi_0 - \int c_0 \,d\pi_0.$ From ($\ref{holonomic2}$) and from Jensen's inequality we get
\[\int c_0 \,d\pi_0 = \int \log( \hat L_{c_0}u)\,d\pi_0-\int \log u \,d\pi_0 + \int c \,d\pi_0\]
\[=\int \log( \hat L_{c_0}u)\,d\pi_0-\int \hat L_{c_0}( \log u) \,d\pi_0 + \int c \,d\pi_0\]
\[= \int \log( \int e^{c_0}u\,d\alpha)\,d\pi_0-\int \int e^{c_0} \log u \,d\alpha \,d\pi_0 + \int c \,d\pi_0 \geq \int c \,d\pi_0.\]
\end{proof}

\begin{corollary}\label{associated} If $\pi$ is the holonomic probability associated to the normalized cost function $c$, then $H_\alpha(\pi)=-\int c\,d\pi$.
\end{corollary}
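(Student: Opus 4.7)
The plan is to show that the supremum in the definition of $H_\alpha(\pi)$ is attained precisely at the normalized cost $c$ itself, which immediately yields the claimed identity.

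First I would recall the definition
\[ H_\alpha(\pi) = -\sup_{L_{c',\alpha}1=1} \int c'(x,z)\, d\pi(x,z), \]
where the supremum runs over all normalized Lipschitz cost functions $c'$. Since $c$ is itself normalized and Lipschitz (by hypothesis, $\pi$ is the holonomic probability associated to $c$), the cost $c$ is an admissible competitor, so $c$ furnishes the candidate value $\int c\, d\pi$ for the supremum.

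Next I would apply Lemma \ref{lema entropia} with $c_0 := c$ (so that the associated holonomic probability is $\pi_0 := \pi$) and with $c' $ an arbitrary normalized Lipschitz cost function in place of $c$ in the lemma. The lemma then states
\[ \int c'\, d\pi \leq \int c\, d\pi \]
for every normalized $c'$. Taking the supremum over such $c'$ gives
\[ \sup_{L_{c',\alpha}1=1} \int c'\, d\pi = \int c\, d\pi, \]
with the supremum attained at $c' = c$. Negating this equality and comparing with the definition of $H_\alpha(\pi)$ yields $H_\alpha(\pi) = -\int c\, d\pi$.

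There is essentially no obstacle: the corollary is a direct rephrasing of Lemma \ref{lema entropia}, which provides the sharp inequality and identifies its equality case. The only point worth double-checking is that $c$ really belongs to the admissible class of the supremum; this is guaranteed because the construction of $\pi$ in equation (\ref{holonomic}) presupposes $c$ Lipschitz and normalized, so no additional regularity issue arises.
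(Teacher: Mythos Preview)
Your proof is correct and follows exactly the intended approach of the paper: the corollary is stated immediately after Lemma~\ref{lema entropia} with no separate proof, since it is meant to be its direct consequence, and your argument spells out precisely that deduction.
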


It is known that the Kolmogorov-Sinai entropy in symbolic dynamics is an affine function on the space of invariant probabilities (see \cite{P}, Theorem 8.1). The next example shows that this property is false, in general, for holonomic measures

\vspace{0.5cm}

\begin{example} Let $X=\{1,2\}$ and $Z=\{1,2\}$. Define $\tau_{x}(z)=x,\,\, x=1,2,\, z=1,2$. In order to satisfy (\ref{gama}) suppose that $d(1,2) = 2$ on $X$ and $d(1,2)= 1$ on $Z$.  Any probability $\pi \in X\times Z$ can be identified with a $2\times 2$ matrix
\[\pi = \left( \begin{array}{ll}\pi_{11} & \pi_{12} \\ \pi_{21} & \pi_{22} \end{array}\right),\,\,\, \pi_{ij}\geq 0, \,\,\pi_{11}+\pi_{12}+\pi_{21}+\pi_{22} = 1.\]
 A  probability $\pi$ is holonomic if and only if the matrix is symmetric. Consider the holonomic probabilities
\[\pi = \left( \begin{array}{ll}1/4 & 1/4 \\ 1/4 & 1/4 \end{array}\right),\,\,\,\, \eta^1=\left( \begin{array}{cc}1/2 & 0 \\ 0 & 1/2 \end{array}\right) \,\,\,\,\text{and}\,\,\,\,
\eta^2=\left( \begin{array}{cc}0 & 1/2 \\ 1/2 & 0 \end{array}\right)\]
and $\alpha = (1/2,1/2)$ a fixed vector of probabilities on $X$. We have $\pi = \frac{1}{2}\eta^1 + \frac{1}{2}\eta^2$, but $H_\alpha(\pi)=0$ while $H_\alpha(\eta^1) $ and $H_\alpha(\eta^2)$ are negative numbers.
Indeed, $\pi$ is the holonomic probability associated to the normalized function $c=0$, then $H_{\alpha}(\pi) = 0$. Consider
\[c^1(x,z) = \left\{ \begin{array}{ll} \log(1.5), & \text{if}\,\, x=z \\ \log(0.5), & \text{if}\,\, x\neq z\end{array}\right. \]
this function $c^1$  is normalized, then
\[H_{\alpha}(\eta^{1}) \leq - \int c^1\,d\eta^1 = - \log(1.5).\] In the same way, the function
\[c^2(x,z) = \left\{ \begin{array}{ll} \log(1.5), & \text{if}\,\, x\neq z \\ \log(0.5), & \text{if}\,\, x= z\end{array}\right.\]
is also normalized, then
\[ H_{\alpha}(\eta^{2}) \leq - \int c^2\,d\eta^2 = - \log(1.5).\]
If we consider the entropy relative to $\tilde{\alpha}=\delta_1 + \delta_2$ then it can be shown that
\[H_{\tilde{\alpha}}(\pi) = \log(2)\,\,\text{and}\,\,\, H_{\tilde{\alpha}}(\eta^1)= H_{\tilde{\alpha}}(\eta^2)=0.\]
\end{example}

\bigskip

\begin{theorem}[Variational Principle]\label{varprin} If $c$ is a Lipschitz function, then
$P_{\alpha}(c)$ is equal to $\log(\lambda)$ where $\lambda=\lambda_c$ is the unique positive eigenvalue  associated to a positive eigenfunction $h$ for $L_{c,\alpha}$. If $\pi$ is the holonomic probability associated to the normalized cost function $\overline{c}:= c +\log(h\circ\tau_x) - \log(h) - \log(\lambda_c)$, then
\[P_\alpha(c) = \int c\, d\pi + H_\alpha(\pi).\]
\end{theorem}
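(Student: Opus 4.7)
The plan is to use the normalized cost $\overline{c}$ associated to $c$ as a clever test function in the definition of entropy, and then recognize that the holonomic probability $\pi$ associated to $\overline{c}$ saturates every relevant inequality.

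First I would establish the upper bound $P_\alpha(c) \leq \log(\lambda)$. For any holonomic $\pi \in \Pi(\tau)$, since $\overline{c}$ is a normalized Lipschitz cost, the definition
$H_\alpha(\pi) = -\sup\{\int c'\,d\pi : L_{c',\alpha}1=1\}$
immediately gives $H_\alpha(\pi) \leq -\int \overline{c}\,d\pi$. Now I would exploit the coboundary structure of $\overline{c}$: since $\overline{c}(x,z) = c(x,z) + \log h(\tau_x(z)) - \log h(z) - \log\lambda$ and $\pi$ is holonomic, the telescoping term $\log h(\tau_x(z)) - \log h(z)$ integrates to zero, so
\[\int \overline{c}\,d\pi = \int c\,d\pi - \log\lambda.\]
Combining these two facts yields $\int c\,d\pi + H_\alpha(\pi) \leq \log\lambda$, and taking the supremum over $\pi \in \Pi(\tau)$ gives $P_\alpha(c) \leq \log\lambda$.

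For the reverse inequality, and to exhibit the maximizer, I would take $\pi$ to be the holonomic probability associated to the normalized cost $\overline{c}$, as constructed in equation (\ref{holonomic}). The preceding Corollary (an immediate consequence of Lemma \ref{lema entropia}) states that $H_\alpha(\pi) = -\int \overline{c}\,d\pi$. Plugging this into the pressure functional and using the same holonomic identity as before,
\[\int c\,d\pi + H_\alpha(\pi) = \int c\,d\pi - \int \overline{c}\,d\pi = \log\lambda.\]
This shows simultaneously that $P_\alpha(c) \geq \log\lambda$ and that the supremum is attained at this specific $\pi$, completing the proof.

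There is essentially no obstacle left at this point: the real work has been absorbed into the uniqueness of $\lambda$ (which legitimizes the construction of $\overline{c}$), the existence/uniqueness of the fixed probability $\rho$ for $L_{\overline{c}}^{*}$ (which defines $\pi$), and Lemma \ref{lema entropia} (which identifies $\overline{c}$ as the maximizer in the variational definition of $H_\alpha(\pi)$). The duality theorem is then just a one-line bookkeeping argument built on these ingredients, with the holonomic constraint playing the single crucial role of annihilating the coboundary $\log h(\tau_x z) - \log h(z)$.
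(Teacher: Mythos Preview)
Your proof is correct and matches the paper's argument essentially line for line: the upper bound comes from testing the entropy definition with the normalized $\overline{c}$ and using the holonomic identity $\int \overline{c}\,d\pi = \int c\,d\pi - \log\lambda$, and the lower bound comes from evaluating at the holonomic probability $\pi_{\overline{c}}$ via the Corollary to Lemma~\ref{lema entropia}. There is nothing to add.
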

\begin{proof}
Let $\lambda_c>0$ be the eigenvalue and $h>0$ the eigenfunction associated to $c$, then $\overline{c}(x,z):= c(x,z) +\log(h\circ\tau_x)(z) - \log(h)(z) - \log(\lambda_c)$ is the normalized cost associated to $c$. As $h$ depends only on $z$, for any $\pi \in \Pi(\tau)$ we have that
$\displaystyle \int \bar c\, d\pi =\int c\, d\pi -\log(\lambda_c).$ From the definition of  entropy, we obtain for any $\pi \in \Pi(\tau)$ that $H_\alpha(\pi)\leq -\int \bar c\, d\pi$. Then

\begin{equation}\label{pressure}
P(c) = \sup_{\pi \in \Pi(\tau)} \left(\int c\, d\pi + H_\alpha(\pi)\right) \leq  \sup_{\pi \in \Pi(\tau)} \left(\int c\, d\pi -\int \bar c\, d\pi \right) = \log(\lambda_c).
\end{equation}

In order  to show the other inequality, let  $\pi_{\bar{c}}$ be the holonomic probability associated to  $\bar c$. Then, from the Corollary~\ref{associated} we get
\[P(c) \geq \int c \, d\pi_{\bar c} + H_\alpha(\pi_{\bar c}) = \int c \, d\pi_{\bar c} - \int \bar c \, d\pi_{\bar c} = \log(\lambda_c).\]
\end{proof}

We will see in the next section, that the above Variational Principle can be interpreted as a duality equation.

\begin{corollary}\label{legendre} For a holonomic probability $\pi$:
\[H_{\alpha}(\pi) = -\sup_{ c \,\text{continuous}} \left[\int c\, d\pi - P_\alpha(c)\right] = -\sup_{P_\alpha(c)=0, \, c \,\text{continuous}} \int c\, d\pi.\]
\end{corollary}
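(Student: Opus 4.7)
The plan is to prove the two equalities separately, with the middle expression $-\sup_{c\text{ cont.}}\bigl[\int c\,d\pi - P_\alpha(c)\bigr]$ serving as a bridge. The key observation that drives everything is that any Lipschitz normalized function $c$ (with $L_{c,\alpha}1=1$) has $\lambda_c = 1$, with eigenfunction $h \equiv 1$, so by Theorem \ref{varprin} one has $P_\alpha(c) = \log(1) = 0$. Thus normalized Lipschitz costs automatically belong to the constraint set $\{P_\alpha(c)=0\}$.

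For the first equality I would prove two inequalities. In one direction, for \emph{any} continuous $c$, the definition of $P_\alpha$ gives immediately
\[
P_\alpha(c) \;\geq\; \int c\, d\pi + H_\alpha(\pi),
\]
so $\int c\,d\pi - P_\alpha(c) \leq -H_\alpha(\pi)$, and taking the sup over continuous $c$ yields $\sup_c[\int c\,d\pi - P_\alpha(c)] \leq -H_\alpha(\pi)$. In the opposite direction, using the defining formula
$H_\alpha(\pi) = -\sup_{L_{c,\alpha}1=1}\int c\,d\pi$ (sup over Lipschitz normalized costs), each such $c$ satisfies $P_\alpha(c)=0$, hence $\int c\,d\pi = \int c\,d\pi - P_\alpha(c) \leq \sup_{c\text{ cont.}}[\int c\,d\pi - P_\alpha(c)]$. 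Taking sup over normalized Lipschitz $c$ gives $-H_\alpha(\pi) \leq \sup_c[\int c\,d\pi - P_\alpha(c)]$, completing the first equality.

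For the second equality I would use translation invariance of the pressure: adding a constant $k$ to $c$ shifts $\int c\,d\pi$ by $k$ and leaves $H_\alpha$ untouched, so $P_\alpha(c+k) = P_\alpha(c)+k$. Hence, given any continuous $c$, the cost $c' := c - P_\alpha(c)$ satisfies $P_\alpha(c')=0$ and $\int c'\,d\pi = \int c\,d\pi - P_\alpha(c)$, which shows
\[
\sup_{c\text{ cont.}}\!\bigl[\textstyle\int c\,d\pi - P_\alpha(c)\bigr] \;\leq\; \sup_{\substack{P_\alpha(c)=0\\ c\text{ cont.}}} \int c\,d\pi.
\]
The reverse inequality is immediate because on the set $\{P_\alpha(c)=0\}$ the subtracted pressure term vanishes.

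There is no real obstacle here; the proof is essentially a bookkeeping exercise built on two already-established facts, namely Theorem \ref{varprin} (identifying $P_\alpha(c)$ with $\log\lambda_c$ and hence equal to $0$ for normalized Lipschitz $c$) and the definition of $H_\alpha$. The only mild subtlety is that the entropy is defined through a sup over \emph{Lipschitz} normalized functions while the corollary's sup ranges over all continuous $c$; this is precisely what makes the bridging argument through the middle expression useful, since the upper bound $\int c\,d\pi - P_\alpha(c) \leq -H_\alpha(\pi)$ holds for every continuous $c$, and the matching lower bound comes from the Lipschitz normalized ones that are already contained in $\{P_\alpha(c)=0\}$.
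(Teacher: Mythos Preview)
Your proof is correct. It differs from the paper's argument in a useful way. The paper first establishes $I_\alpha(\pi)=\sup_{c\ \text{Lipschitz}}\bigl[\int c\,d\pi - P_\alpha(c)\bigr]$ (implicitly via the normalization $c\mapsto\overline{c}$ from Theorem~\ref{varprin}, which for holonomic $\pi$ gives $\int\overline{c}\,d\pi=\int c\,d\pi-P_\alpha(c)$), and then upgrades the supremum from Lipschitz to continuous by approximating continuous functions uniformly by Lipschitz ones and invoking continuity of $P_\alpha$. You instead use the variational definition of $P_\alpha$ directly to get $\int c\,d\pi-P_\alpha(c)\le -H_\alpha(\pi)$ for \emph{every} continuous $c$, and match it from below with Lipschitz normalized costs; this avoids the density/continuity step entirely. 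Your handling of the second equality via the translation $c\mapsto c-P_\alpha(c)$ is the natural one and is left implicit in the paper. Both routes are short; yours is slightly more self-contained since it does not rely on approximation or on continuity of $c\mapsto P_\alpha(c)$.
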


\begin{proof}
We have
\[I_\alpha(\pi) = \sup_{ c \,\text{Lipschitz}} \left[\int c\, d\pi - P_\alpha(c) \right]\leq \sup_{ c \,\text{continuous}} \left[\int c\, d\pi - P_\alpha(c)\right].\]
Using that any continuous function $c$ can be approximated by Lipschitz functions in the uniform topology, and that $P_\alpha$ is continuous we get the equality.
\end{proof}

\bigskip

The pressure is a convex function on $C(X\times Z)$ and, at the standard Thermodynamic Formalism any invariant probability attaining the supremum is ergodic. For the holonomic case, the set of holonomic measures is convex, but the probabilities attaining the supremum can not be extreme points of this convex set.

\bigskip

\noindent
\begin{example}
We will consider the same IFS as in a previous example. Let $X=\{1,2\}$, $Z=\{1,2\}$ and $\tau_{x}(z)=x,\,\, x=1,2,\, z=1,2$. We recall that  a probability $\pi$ is holonomic if and only if the associated matrix is symmetric. Consider the holonomic probabilities
\[\pi^1 = \left( \begin{array}{ll}1 & 0 \\ 0 & 0 \end{array}\right),\,\,\,\, \pi^2=\left( \begin{array}{cc}0 & 1/2 \\ 1/2 & 0 \end{array}\right)\,\,\,\,\text{and}\,\,\,\,
\pi^3=\left( \begin{array}{cc}0 & 0 \\ 0 &1 \end{array}\right)\]
and  $\alpha = (1/2,1/2)$ a fixed vector of probabilities on $X$. We can easyly  see that these probabilities are the extreme points of the convex set formed by holonomic probabilities. One can show that $H_\alpha(\pi^1)=H_\alpha(\pi^2)=H_\alpha(\pi^3)=-\log(2)$. Given a normalized cost function $c$ we obtain
\[\int c\, d\pi^1 + H_\alpha(\pi^1)= \int c\, d\pi^1 -\log(2) =c(1,1) -\log(2)< 0\]
because $e^{c(1,1)}+e^{c(2,1)}=2$. As $c$ is normalized we get
\[\int c\, d\pi^1 + H_\alpha(\pi^1) < 0 = \sup_{\pi \in \Pi(\tau)}\int c\, d\pi + H_\alpha(\pi).\]
Similar computations can be made for $\pi^2$ and $\pi^3$.
\end{example}

\section{Duality results}
Part of this section contains ideas previously developed in \cite{LM} and \cite{LMMS2}.
Given a normed linear space $E$ and a convex function $\Theta:E\to\mathbb{R}\cup \{+\infty\}$, the \textit{Legendre-Fenchel transform } of $\Theta$ is the function $\Theta^{*}:E^{*}\to \mathbb{R}\cup\{+\infty\}$, given by
\begin{equation}
\label{transformada}
\Theta^{*}(f)=\sup\limits_{x\in E} [f(x)-\Theta(x)].
\end{equation}

\begin{theorem}[\textbf{Fenchel-Rockafellar duality, \cite{Vi1}}]\label{FR}
\label{Fenchel}
Suppose that $E$ is  a normed vector space,  $\Theta$ and $\Xi$ are two convex functions defined on $E$ taking values in $\mathbb{R}\cup \{+\infty\}$. Denote $\Theta^{\ast}$ and  $\Xi^{\ast}$, the Legendre-Fenchel transform of  $\Theta$ and $\Xi$, respectively.
Suppose that there exists  $v_0\in E$, such that $\Theta(v_0)<+\infty,\, \Xi(v_0)<+\infty$ and that $\Theta$ is continuous on $v_0$.
Then,
\begin{equation}
\inf_{v \in E}[\Theta(v)+\Xi(v)]=\sup_{f\in E^{*}}[-\Theta^{*}(-f)-\Xi^{*}(f)] \label{rockafeller}
\end{equation}
Moreover, the supremum in ($\ref{rockafeller}$) is attained in at least one element in $E^*$.
\end{theorem}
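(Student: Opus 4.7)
The plan is to apply the geometric Hahn-Banach separation theorem in $E\times\mathbb{R}$ to the epigraph of $\Theta$ and the hypograph of $\alpha-\Xi$, where $\alpha:=\inf_{v\in E}[\Theta(v)+\Xi(v)]$. Weak duality, i.e.\ the easy inequality $\sup\leq\inf$, is immediate from the definitions: for every $v\in E$ and $f\in E^*$ one has $\Theta^*(-f)\geq -f(v)-\Theta(v)$ and $\Xi^*(f)\geq f(v)-\Xi(v)$; summing yields $-\Theta^*(-f)-\Xi^*(f)\leq \Theta(v)+\Xi(v)$. What needs a separation argument is the reverse inequality together with the attainment claim.

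Assuming $\alpha\in\mathbb{R}$ (the case $\alpha=-\infty$ is vacuous by weak duality, and $\alpha=+\infty$ is ruled out by $\Theta(v_0)+\Xi(v_0)<+\infty$), I would introduce in $E\times\mathbb{R}$ the convex sets
\[
C_1=\{(v,t):\, t\geq \Theta(v)\},\qquad C_2=\{(v,t):\, t\leq \alpha-\Xi(v)\}.
\]
Continuity of $\Theta$ at $v_0$ implies $\Theta$ is bounded above on some neighbourhood of $v_0$, so $C_1$ has nonempty interior, while the definition of $\alpha$ forces $\mathrm{int}(C_1)\cap C_2=\emptyset$ (a common point would satisfy $\Theta(v)<t\leq \alpha-\Xi(v)$, violating $\alpha\leq \Theta(v)+\Xi(v)$). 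The geometric Hahn-Banach theorem then furnishes a nonzero continuous linear functional, which I write as $\Lambda(v,t)=-f(v)+kt$ with $f\in E^*$, $k\in\mathbb{R}$, together with a real $c$ satisfying $\Lambda\geq c$ on $C_1$ and $\Lambda\leq c$ on $C_2$.

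Since $t$ is unbounded above on $C_1$, one necessarily has $k\geq 0$; the interior-point hypothesis rules out $k=0$ (otherwise the inequality $-f(v)\geq c$ on a neighbourhood of $v_0$ combined with $-f(v_0)=c$ would force $f\equiv 0$, contradicting nontriviality of $\Lambda$), so after rescaling I may take $k=1$. Evaluating the two inequalities at the boundary points $t=\Theta(v)$ and $t=\alpha-\Xi(v)$ and taking suprema over $v$ converts them into $\Theta^*(f)\leq -c$ and $\Xi^*(-f)\leq c-\alpha$; adding gives $-\Theta^*(-(-f))-\Xi^*(-f)\geq\alpha$, so $-f\in E^*$ attains the supremum in $(\ref{rockafeller})$ and forces equality.

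The main obstacle I anticipate is excluding the vertical case $k=0$; this is precisely where the continuity of $\Theta$ at $v_0$ is indispensable, since it produces genuine interior points of the epigraph rather than merely boundary ones. All the remaining steps, namely convexity and disjointness of the separating sets, the rescaling $k=1$, and the Legendre-transform bookkeeping, are routine verifications.
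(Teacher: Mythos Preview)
Your argument is correct and is the standard route to Fenchel--Rockafellar duality via geometric Hahn--Banach separation of the epigraph of $\Theta$ from the hypograph of $\alpha-\Xi$. The only place where the write-up is slightly telegraphic is the exclusion of $k=0$: you assert $-f(v_0)=c$, and it is worth noting that this equality comes from combining $-f(v_0)\geq c$ (since $(v_0,\Theta(v_0))\in C_1$) with $-f(v_0)\leq c$ (since $(v_0,\alpha-\Xi(v_0))\in C_2$, using $\Xi(v_0)<+\infty$). Once that is made explicit, the rest is clean.

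As for the comparison with the paper: there is nothing to compare. The paper does not prove Theorem~\ref{FR}; it quotes it as a known result from Villani's book \cite{Vi1} and then applies it as a black box in the proofs of Theorem~\ref{varprindual}, Proposition~\ref{varpri2}, and Theorem~\ref{kantorovich}. So you have supplied a proof where the authors supplied only a citation.
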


We recall that we did, in the previous section, for a Lipschitz function $c(x,z)$ the normalization
\[\overline{c}(x,z) = c(x,z) + \log(h(\tau_x z)) - \log(h(z)) - \log(\lambda).\]
This normalization is closely related to the variational principle given in Theorem \ref{varprin}.
Different normalizations  imply  different duality results for the variational principles. In what follows, we show some examples concerning this fact. Actually, we can replace the equality in the above normalization by an inequality.

Now we consider two more compact spaces $Y$ and $W$ and uniform contractions $\tau_y: W \to W$ defining an IFS in the same way that $\tau_x: Z \to Z$. Then the results contained in the previous section can be applied if we replace $X$ by $Y$ and $Z$ by $W$. Now we have the spaces $X,Y,Z$ and $W$, and two IFS $\{\tau_x(z)\}$ and $\{\tau_y(w)\}$.

Denote by $\Pi(\cdot,\cdot,\tau)$ the set of probabilities $\pi \in \mathcal{P}(X\times Y \times Z\times W)$ satisfying
\begin{equation}\label{biholonomic}
\int g(\tau_x(z))\,d\pi = \int g(z)\,d\pi \, \,\,\text{and}\,\,
\int g(\tau_y(w))\, d\pi = \int g(w)\, d\pi,\,
\end{equation}
where $g\in C(Z)$ or $g\in C(W)$ respectively.
We chose fixed probabilities $\alpha(x)$ and $\beta(y)$ satisfying $\rm{supp}(\alpha)=X$ and $\rm{supp}(\beta)=Y$. For $\pi \in \Pi(\cdot,\cdot,\tau)$, denote by $H_\alpha(\pi)$ the relative entropy of the $(X,Z)$-marginal of $\pi$ with respect to $\alpha$. In the same way denote by $H_\beta(\pi)$ the relative entropy of the $(Y,W)$- marginal of $\pi$ with respect to $\beta$.
We define the marginal pressure of a continuous cost function $c(x,y,z,w)$ relative to $(\alpha,\beta)$ by
\[ P_{\alpha,\beta}(c) = \sup_{\pi \in \Pi(\cdot,\cdot,\tau)} \int c(x,y,z,w) \, d\pi +H_{\alpha}(\pi) + H_\beta(\pi).\]

\begin{proposition}\label{varpri2} Given a continuous cost $c=c(x,y,z,w)$, consider  the set  $\Phi$ containing the numbers $\lambda$ such that
\[ c(x,y,z,w)  -\lambda \leq b(x,z) +d(y,w)\]
for some continuous functions $b(x,z)$ and $d(y,w)$ with $P_\alpha(b)=P_\beta(d)=0$. Then
\[P_{\alpha,\beta}(c) = \inf\{\lambda: \lambda \in \Phi\}.\]
\end{proposition}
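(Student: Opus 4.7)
The plan is to establish the two inequalities $P^m(c)\le\inf\Phi$ and $\inf\Phi\le P^m(c)$ separately; the second invokes the Fenchel--Rockafellar duality of Theorem \ref{FR}.

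For the first inequality, take $\lambda\in\Phi$ with witnesses $b\in C(X\times Z)$ and $d\in C(Y\times W)$ satisfying $P_\alpha(b)=P_\beta(d)=0$ and $c(x,y,z,w)-\lambda\le b(x,z)+d(y,w)$. For any $\pi\in\Pi(\cdot,\cdot,\tau)$ the marginals $\pi_{XZ}\in\Pi(\tau_x)$ and $\pi_{YW}\in\Pi(\tau_y)$ are holonomic, so by the variational definition of pressure $\int b\,d\pi+H_\alpha(\pi)\le P_\alpha(b)=0$ and $\int d\,d\pi+H_\beta(\pi)\le 0$. Integrating the pointwise bound against $\pi$ and adding these two inequalities gives $\int c\,d\pi+H_\alpha(\pi)+H_\beta(\pi)\le\lambda$; the supremum over $\pi$ then yields $P^m(c)\le\lambda$, hence $P^m(c)\le\inf\Phi$.

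For the reverse inequality, the identity $P_\alpha(b+k)=P_\alpha(b)+k$ (immediate from the variational definition, since $\pi$ is a probability) allows rewriting
\[\inf\Phi=\inf\{P_\alpha(b)+P_\beta(d):b\in C(X\times Z),\,d\in C(Y\times W),\,b+d\ge c\}.\]
I would then apply Theorem \ref{FR} on $E=C(X\times Y\times Z\times W)$ with supremum norm, taking
\[\Theta(u)=\inf\{P_\alpha(b)+P_\beta(d):b+d\ge u\},\qquad \Xi(u)=0\text{ if }u=c,\ +\infty\text{ otherwise}.\]
Both functions are convex; $\Theta$ is $1$-Lipschitz (if $(b,d)$ is feasible for $u_2$, the pair $(b+\|u_1-u_2\|_\infty,d)$ is feasible for $u_1$, and $P_\alpha(b+\|u_1-u_2\|_\infty)=P_\alpha(b)+\|u_1-u_2\|_\infty$), and $\Theta(c)\le\|c\|_\infty<+\infty$ by taking $b=\|c\|_\infty,\,d=0$, so the hypotheses of Theorem \ref{FR} hold at $v_0=c$.

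The crux is identifying the transforms. Clearly $\Xi^{*}(f)=\int c\,df$. For $\Theta^{*}$, interchanging suprema gives
\[\Theta^{*}(f)=\sup_{b,d,\,u\le b+d}\Bigl[\int u\,df-P_\alpha(b)-P_\beta(d)\Bigr].\]
If $f$ is not a non-negative measure, choosing $u\le 0$ suitably makes $\int u\,df$ arbitrarily large while $\Theta(u)\le 0$, forcing $\Theta^{*}(f)=+\infty$; if $f=\pi\ge 0$ has total mass different from $1$, testing against constant $u$ again gives $+\infty$. For a probability $\pi$ the inner supremum equals $\int(b+d)\,d\pi$ and the two outer suprema separate as
\[\sup_b\bigl\{\textstyle\int b\,d\pi_{XZ}-P_\alpha(b)\bigr\}+\sup_d\bigl\{\textstyle\int d\,d\pi_{YW}-P_\beta(d)\bigr\}.\]
After the substitution $b\leftarrow b-P_\alpha(b)$, the corollary following Theorem \ref{varprin} identifies each outer supremum with $-H_\alpha(\pi_{XZ})$, respectively $-H_\beta(\pi_{YW})$, when the marginal is holonomic; for a non-holonomic marginal, testing against the coboundary family $t(c_0\circ\tau_x-c_0)$---which has pressure zero since the associated Ruelle operator fixes $e^{-c_0}$ (so eigenvalue $1$), but whose integral against a non-holonomic measure can be made non-zero for suitable $c_0$---drives the supremum to $+\infty$ as $t\to\infty$. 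Substituting $f=-\pi$ in Theorem \ref{FR} then yields
\[\inf\Phi=\Theta(c)=\sup_{\pi\in\Pi(\cdot,\cdot,\tau)}\Bigl[\int c\,d\pi+H_\alpha(\pi)+H_\beta(\pi)\Bigr]=P^m(c).\]
The main technical obstacle is this identification of $\Theta^{*}$, particularly the $+\infty$ values on non-holonomic or non-probability measures via the coboundary trick; everything else is routine verification of the Fenchel--Rockafellar hypotheses.
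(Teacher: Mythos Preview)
Your argument is correct, but it follows a different Fenchel--Rockafellar decomposition from the paper's. The paper keeps the same $\Theta$ as in Theorem \ref{varprindual} (the indicator of $\{u\ge c-b-d:\ P_\alpha(b)=P_\beta(d)=0\}$) and modifies only $\Xi$, setting $\Xi(u)=\lambda$ when $u=\lambda-g(\tau_x z)+g(z)-f(\tau_y w)+f(w)$; thus the holonomy constraint is enforced by $\Xi^{*}$, and the entropy terms appear in $\Theta^{*}(-\pi)$ directly via the \emph{definition} $H_\alpha(\pi)=-\sup_{P_\alpha(b)=0}\int b\,d\pi$. A short final remark absorbs the coboundaries $g,f$ into $b,d$ to recover the set $\Phi$ as stated. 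Your decomposition instead loads everything onto $\Theta(u)=\inf\{P_\alpha(b)+P_\beta(d):b+d\ge u\}$ with $\Xi$ the indicator of the single point $c$; you then recover both the holonomy constraint and the entropy from $\Theta^{*}$, invoking the Legendre-transform characterization of $H_\alpha$ (the Corollary after Theorem \ref{varprin}) on holonomic marginals and a coboundary argument to force $\Theta^{*}=+\infty$ off $\Pi(\cdot,\cdot,\tau)$. Both routes are valid; the paper's has the advantage of reusing verbatim the $\Theta$ and most of the computations from Theorem \ref{varprindual}, while yours isolates the duality $P_\alpha\leftrightarrow -H_\alpha$ as the working principle and needs no separate absorption step for the coboundaries. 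Two minor remarks: your opening paragraph (the inequality $P^{m}(c)\le\inf\Phi$) is already contained in the Fenchel--Rockafellar equality and is therefore redundant, though harmless; and your coboundary-pressure claim is cleanest via the variational definition, $P_\alpha(g\circ\tau_x-g)=\sup_{\pi\in\Pi(\tau)}H_\alpha(\pi)=0$, rather than via the transfer operator (which would require $g$ Lipschitz).
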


This proposition will be proved below.

Following ideas of Transport Theory, we chose fixed  probabilities $\mu \in \mathcal{P}(X)$, $\nu \in \mathcal{P}(Y)$ and consider the set of probabilities $\Pi(\mu,\nu,\tau) \subset \Pi(\cdot,\cdot,\tau)$ containing the probabilities $\pi$ that also satisfy
\begin{equation}\label{biholonomic2}
\int f(x)\, d\pi = \int f(x)\,d\mu, \,\,f\in C(X)\,\,\,\text{and}\,\,\,
\int g(y)\, d\pi = \int g(y)\,d\nu, \,\,g\in C(Y).
\end{equation}
This is the set of probabilities $\pi\in \mathcal{P}(X\times Y\times Z \times W)$ with $X$-marginal equal to $\mu$, $Y$-marginal equal to $\nu$, $(X,Z)$-marginal holonomic with respect to $\tau_x$ and $(Y,W)$-marginal holonomic with respect to $\tau_y$.

In order to show that this set is not empty, consider for each $x\in X$ and $y\in Y$ the points $z_x$ and $w_y$ that are the fixed points for the contractions $\tau_x$ and $\tau_y$ respectively.
Let $\pi$ be the probability  defined by $d\pi = (d\delta_{x,z_x}d\mu(x))(d\delta_{y,w_y}d\nu(y))$, what means that
\[ \int g(x,y,z,w) \,d\pi  = \int \int g(x,y,z_x,w_y) \,d\mu(x)d\nu(y).\]
If $g\in C(Z)$ we have
\[\int g(z)\, d\pi = \int g(z_x)\,d\mu(x) = \int g(\tau_x(z_x))\,d\mu(x)  = \int g(\tau_x(z))\,d\pi.\]
If $g\in C(W)$ we have
\[\int g(w)\, d\pi = \int g(w_y)\,d\nu(y) = \int g(\tau_y(w_y))\,d\nu(y)  = \int g(\tau_y(w))\,d\pi.\]
This shows that $\pi$ satisfies (\ref{biholonomic}), and is clear that $\pi$ satisfies (\ref{biholonomic2}).

\begin{theorem}(Duality)\label{varprindual} For a continuous cost $c(x,y,z,w)$ we have
\[\inf_{_{_{P_{\alpha,\beta}(c -\varphi(x) -\psi(y))=0}}} \int \varphi(x) \, d\mu + \int \psi(y)\, d\nu =
\sup_{_{_{\pi\in \Pi(\mu,\nu,\tau)}}} \int c(x,y,z,w)\, d\pi + H_{\alpha}(\pi) + H_{\beta}(\pi).\]
\end{theorem}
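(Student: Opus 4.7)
The plan is to deduce the duality by applying the Fenchel--Rockafellar Theorem \ref{FR} on $E = C(X\times Y\times Z\times W)$ with the supremum norm, taking as convex functionals
\[\Theta(u) = \begin{cases} 0 & \text{if } P^m(c-u)\leq 0,\\ +\infty & \text{otherwise,}\end{cases} \qquad \Xi(u) = \begin{cases} \int\varphi\,d\mu + \int\psi\,d\nu & \text{if } u(x,y,z,w) = \varphi(x)+\psi(y),\\ +\infty & \text{otherwise.}\end{cases}\]
Both are convex: $\Theta$ is the indicator of a convex sublevel set, since $P^m$ is convex as a supremum of affine functionals, and $\Xi$ is linear on its effective domain. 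Since $P^m$ is $1$-Lipschitz in the supremum norm, $\Theta$ vanishes on a whole neighborhood of any sufficiently large constant $v_0$ and is continuous there, while $\Xi(v_0)=v_0<+\infty$ (take $\varphi\equiv v_0,\psi\equiv 0$); so the hypotheses of Theorem \ref{FR} are satisfied.

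First I would check that $\inf_u[\Theta(u)+\Xi(u)]$ is exactly the LHS. The translation invariance $P^m(f-t)=P^m(f)-t$ lets one replace the constraint $P^m(c-\varphi-\psi)\leq 0$ by equality, by lowering $\varphi$ by the appropriate positive constant (which strictly decreases the linear objective $\int\varphi\,d\mu$); hence the $\leq 0$ and $=0$ infima coincide.

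Next I would compute the Legendre transforms, identifying $E^{\ast}$ with signed Radon measures $\sigma$ via Riesz representation. A direct calculation gives $\Xi^{\ast}(f)=0$ when $\sigma_X=\mu$ and $\sigma_Y=\nu$, and $+\infty$ otherwise. After the change of variable $v = c-u$,
\[\Theta^{\ast}(-f) = -\int c\,d\sigma + \sup_{v\,:\,P^m(v)\leq 0}\int v\,d\sigma.\]
The heart of the argument is to identify this last supremum as $-H_\alpha(\sigma_{XZ})-H_\beta(\sigma_{YW})$ when $\sigma$ is a positive probability in $\Pi(\cdot,\cdot,\tau)$, and as $+\infty$ otherwise. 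Using Proposition \ref{varpri2}, the constraint $P^m(v)\leq 0$ can (up to $\epsilon$-approximation) be rewritten as $v\leq b(x,z)+d(y,w)$ for continuous $b,d$ with $P_\alpha(b)=P_\beta(d)=0$; for positive $\sigma$ this splits the supremum into the two single-variable suprema, each identified by the Corollary after Theorem \ref{varprin} with $-H_\alpha(\sigma_{XZ})$ and $-H_\beta(\sigma_{YW})$ respectively. A non-positive $\sigma$ is ruled out by testing against $v=-tg$ with $g\geq 0$ continuous, $\int g\,d\sigma<0$, and $t\to+\infty$ (such $v\leq 0$ automatically satisfy $P^m(v)\leq 0$ since $\sup_\pi[H_\alpha(\pi)+H_\beta(\pi)]\leq 0$); a non-holonomic $(X,Z)$- or $(Y,W)$-marginal is ruled out by testing against $v=t[g(\tau_x z)-g(z)]$ for suitable $g\in C(Z)$, since such $v$ lies in $\{P^m\leq 0\}$ for every $t\in\mathbb{R}$ while $\int v\,d\sigma\neq 0$.

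Combining these computations, the dual side of Fenchel--Rockafellar becomes $\sup_{\pi\in\Pi(\mu,\nu,\tau)}\int c\,d\pi+H_\alpha(\pi)+H_\beta(\pi)$, which is the RHS. The main obstacle is the explicit evaluation of $\sup_{P^m(v)\leq 0}\int v\,d\sigma$: the splitting via Proposition \ref{varpri2} requires careful $\epsilon$-approximation (the decomposition $v\leq b+d$ is realized only in the infimum defining $P^m$), and the automatic enforcement of positivity and bi-holonomicity of $\sigma$ must be extracted from the correct families of test functions. Once that key identity is in hand, the rest is a routine assembly of the Fenchel--Rockafellar output.
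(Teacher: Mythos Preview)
Your approach is correct and leads to the same duality, but it is organized differently from the paper's proof. Both apply Fenchel--Rockafellar on $E=C(X\times Y\times Z\times W)$; the difference is in how the constraints are distributed between the two convex functionals. The paper takes
\[
\Theta(u)=\begin{cases}0&\text{if }u\geq c-b(x,z)-d(y,w)\text{ for some }b,d\text{ with }P_\alpha(b)=P_\beta(d)=0,\\ +\infty&\text{otherwise},\end{cases}
\]
and builds the holonomic constraints into $\Xi$ by allowing $u=\varphi(x)+\psi(y)-g(\tau_x z)+g(z)-f(\tau_y w)+f(w)$. Thus in the paper the computation of $\Theta^{*}(-\pi)$ already comes pre-split as $\sup_{P_\alpha(b)=0}\langle\pi,b\rangle+\sup_{P_\beta(d)=0}\langle\pi,d\rangle$, which is immediately $-H_\alpha(\pi)-H_\beta(\pi)$ by definition, while positivity of $\pi$ is forced by $\Theta^{*}$ and bi-holonomicity plus the marginal conditions come from $\Xi^{*}$. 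Your choice packages everything into the single sublevel set $\{P^m(c-u)\leq 0\}$ and keeps $\Xi$ minimal; the price is that the evaluation of $\sup_{P^m(v)\leq 0}\int v\,d\sigma$ must recover positivity, bi-holonomicity, \emph{and} the entropy splitting, for which you invoke Proposition~\ref{varpri2} and an $\epsilon$-approximation. This is legitimate (Proposition~\ref{varpri2} is proved independently by the same Fenchel--Rockafellar scheme with a simpler $\Xi$), but note that in the paper's ordering its proof comes \emph{after} the present theorem and refers back to it for ``similar arguments'', so your route effectively reverses the logical dependence between the two results. The paper's arrangement buys a direct, one-shot computation of the dual with no approximation step; yours buys a cleaner statement of $\Theta$ and a more transparent convexity check, at the cost of appealing to Proposition~\ref{varpri2} as a prerequisite.
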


\begin{proof} The structure of this proof  is close to \cite{Vi1} and \cite{LMMS2}.  In order to make the computations let $E=C(X\times Y\times Z \times W)$. We can suppose that $c\leq 0$. Indeed, if we add a constant in $c$ then we change the booth sides in the same form.

Define $\Theta,\Xi: E\longrightarrow \mathbb{R}\cup\{+\infty \}$ from
$$\Theta(u)=\left\{\begin{array}{ll}
0,&  \mbox{if} \ u(x,y,z,w)\geq c(x,y,z,w)-  b(x,z)-d(y,w) \, \forall(x,y,z,w)\\ & \mbox{ for some }  b,d\,\, \mbox{continuous with }\, P_{\alpha}(b)=0=P_{\beta}(d) ,\\
\\

+\infty,&  \mbox{otherwise}
\end{array}\right.
$$
and
$$
\Xi(u)=\left\{\begin{array}{ll}
\int_X\varphi \,d\mu + \int_Y \psi\, d\nu,&  \mbox{if} \ u=\varphi(x)+\psi(y)-g(\tau_x(z))+g(z)-f(\tau_y(w))+f(w), \\
&\mbox{where} \ \text{the functions are continuous}\,,\\
\\
+\infty,&  \mbox{otherwise}.
\end{array}\right.
$$

\bigskip

$\Xi$ is well defined because if it is not $+\infty$, then it coincides with $\int u\, d\pi, \pi \in \Pi(\mu,\nu,\tau)$. We recall that $P_\alpha(0)=P_\beta(0) = 0$. Then if $u\geq c$ we have $\Theta(u)=0$. We also recall that any normalized function has zero pressure.

 The hypothesis in Theorem \ref{FR} are satisfied. Indeed, taking $u$ constant sufficiently large $\Theta$ is continuous in $u$, $\Theta(u)<\infty$ and $\Xi(u)<\infty$. Clearly $\Xi$ is a convex function. In order to show that $\Theta$ is convex suppose that $\Theta(u_1)=\Theta(u_2)=0$. We can write  $u_1\geq c- b_1-d_1$, $u_2 \geq c-b_2-d_2$, $P(b_i)=P(d_i)=0$ and
$\lambda u_1+ (1-\lambda)u_2 \geq c  -(\lambda b_1 +(1-\lambda)b_2) -(\lambda d_1 +(1-\lambda)d_2).$
 From the convexity of the pressure we get $P_{\alpha}(\lambda b_1 +(1-\lambda)b_2) \leq 0$, so there exists a constant $a_1\geq 0$ such that $P_{\alpha}(\lambda b_1 +(1-\lambda)b_2+a_1)=0$. In the same way there exists a constant $a_2\geq 0$ such that $P_{\beta}(\lambda d_1 +(1-\lambda)d_2+a_2)=0$, and we have
\[  \lambda u_1+ (1-\lambda)u_2 \geq c  -(\lambda b_1 +(1-\lambda)b_2+a_1) -(\lambda d_1 +(1-\lambda)d_2+a_2),\]
showing that $\Xi(\lambda u_1+ (1-\lambda)u_2)=0$.

For any  $\pi\in E^{*}$, we get
\begin{eqnarray*}
&&\Theta^*(-\pi) =\sup_{u\in E}\left\{\langle -\pi,u \rangle- \Theta(u) \right\}\\
&&= \sup_{u\in E}\left\{\langle\pi,u\rangle : \ u\leq - c + b+d,\,\,  P_\alpha(b)=0=P_\beta(d) \right\}\\
&&=\left\{\begin{array}{ll}
\displaystyle\langle \pi,\, - c \,\rangle +\sup_{b\,: \,  P_\alpha(b)=0}\langle \pi, b\rangle + \sup_{d\,: \,  P_\beta(d)=0}\langle \pi, d\rangle\,,\ \mbox{if} \ \pi\in M^+ \\
\\
+\infty, \ \mbox{otherwise}.
\end{array}\right.
\end{eqnarray*}
In the above computation we use that if $\pi\notin M^{+}(X\times Y \times Z \times W)$, there exists $u\leq 0$ such that $\langle\pi,u\rangle >0$. From the hypothesis of that $-c\geq 0$ we get $u \leq -c +0 +0$ where $b=0$ and $d=0$ have zero pressure (they are normalized). The same argument can be used for $\lambda u, \, \lambda \to +\infty$, showing that $\Theta^{*}(-\pi)=+\infty$.

Analogously
$$\begin{array}{l}
\Xi^*(\pi)=\displaystyle{\sup_{u\in E}}\left\{\langle \pi,u \rangle - \Xi(u)\right\} \\ \\
=\displaystyle{\sup_{ (\varphi,\psi,f,g)}\left\{ \langle \pi, \varphi(x)+\psi(y)- g(\tau_x(z))+ g (z) -f(\tau_y(w))+f(w) \rangle- \int_X \varphi\, d\mu -\int \psi\,d\nu  \right\}.}\\ \\
=\left\{\begin{array}{l}
 \displaystyle{0,  \, \mbox{if} \,  \pi \,\text{satisfies}\, (\ref{biholonomic}),(\ref{biholonomic2}) }
\\ \\
+\infty, \, \mbox{otherwise}.
\end{array}\right.
\end{array}$$

We observe that if $\Theta^{*}(-\pi)<+\infty$ and $\Xi^{*}(\pi)<+\infty$ then $\pi \in   \Pi(\mu,\nu,\tau)$. In this case we get  $$\displaystyle -\sup_{b\,: \,  P_\alpha(b)=0} \pi( b)=H_{\alpha}(\pi)$$ and $$\displaystyle -\sup_{d\,: \,  P_\beta(d)=0} \pi( d)=H_{\beta}(\pi).$$

Let
$\Phi_c$ be the set of continuous functions $\varphi(x)$ and $\psi(y)$ such that
$$ c(x,y,z,w) -\varphi(x)-\psi(z) +g(\tau_x(z))-g(z) + f(\tau_y(w)) -f(w) \leq b(x,z)+d(y,w)$$
for some continuous functions $f,g,b,d$ with $P_{\alpha}(b)=P_{\beta}(d) = 0$.

From (\ref{rockafeller}) we get
\[\inf_{(\varphi,\psi)\in\Phi_c} \int_X \varphi \, d\mu + \int_Y \psi\,d\nu =  \inf_{u\in E}[\Theta(u)+\Xi(u)]=\]
\begin{eqnarray*}
&&=\sup_{\pi\in E^{*}}[-\Theta^{*}(-\pi)-\Xi^{*}(\pi)]
=\sup_{\pi \in E^{*}}\left\{
\begin{array}{ll}
\displaystyle \pi(c) +H_\alpha(\pi)+H_{\beta}(\pi)\, \,   ,
 & \mbox{if} \ \pi\in\Pi(\mu,\nu,\tau) \\
\\
-\infty,  &  \mbox{otherwise}
\end{array}
\right\}\\
&&= \sup_{\pi\in \Pi(\mu,\nu,\tau)} \{\pi(c)+H_{\alpha}(\pi)+H_{\beta}(\pi) \}= \sup_{\pi \in \Pi(\mu,\nu,\tau)} \int  c  \, d\pi\, +\, H_\alpha(\pi)+H_\beta(\pi).
\end{eqnarray*}

Now, we are going to show that
\[\inf_{P_{\alpha,\beta}(c-\varphi-\psi)=0} \int_X \varphi \, d\mu + \int_Y \psi\,d\nu = \inf_{(\varphi,\psi)\in\Phi_c} \int_X \varphi \, d\mu + \int_Y \psi\,d\nu  \]
\[=\sup_{\pi \in \Pi(\mu,\nu,\tau)} \int  c  \, d\pi\, +\, H_\alpha(\pi)+H_\beta(\pi).\]
The second equality was proved above.
If $(\varphi,\psi)\in\Phi_c$ then there exist $f$, $g$, $b$, $d$ such that
\[c(x,y,z,w) -\varphi(x)-\psi(y) +g(\tau_x(z))-g(z) + f(\tau_y(w)) -f(w) \leq b(x,z)+d(y,w)\]
and for any $\pi \in \Pi(\cdot,\cdot,\tau)$ we have
\[\int  c(x,y,z,w) -\varphi(x)-\psi(y) \,d\pi +H_{\alpha}(\pi)+H_\beta(\pi)  \leq P_\alpha(b) + P_\beta(d) = 0.\]
Therefore $P_{\alpha,\beta}(c-\varphi-\psi) \leq 0$. This shows that
\[\inf_{P_{\alpha,\beta}(c-\varphi-\psi)\leq 0} \int_X \varphi \, d\mu + \int_Y \psi\,d\nu \leq \inf_{(\varphi,\psi)\in\Phi_c} \int_X \varphi \, d\mu + \int_Y \psi\,d\nu .\]
If $P_{\alpha,\beta}(c-\varphi-\psi)< 0$ there exists a number $a>0$ such that
$P_{\alpha,\beta}(c-\varphi-\psi+a)= 0$. If we denote $\hat \psi = \psi-a$ we get
$P_{\alpha,\beta}(c-\varphi-\hat \psi)= 0$ and $$\int_X \varphi \, d\mu + \int_Y \hat \psi\,d\nu = \int_X \varphi \, d\mu + \int_Y \psi\,d\nu -a < \int_X \varphi \, d\mu + \int_Y  \psi\,d\nu.$$
This shows that
\[ \inf_{P_{\alpha,\beta}(c-\varphi-\psi)\leq 0} \int_X \varphi \, d\mu + \int_Y \psi\,d\nu =\inf_{P_{\alpha,\beta}(c-\varphi-\psi)= 0} \int_X \varphi \, d\mu + \int_Y \psi\,d\nu.\]
Thus, we conclude that
\[\inf_{P_{\alpha,\beta}(c-\varphi-\psi)= 0} \int_X \varphi \, d\mu + \int_Y \psi\,d\nu \leq \sup_{\pi \in \Pi(\mu,\nu,\tau)} \int  c  \, d\pi\, +\, H_\alpha(\pi)+H_\beta(\pi).\]
On the other hand, if  $P_{\alpha,\beta}(c-\varphi-\psi)= 0$ and $\pi\in \Pi(\mu,\nu,\tau)$ then
\[\int c\, d\pi -\int \varphi\, d\mu -\int \psi\, d\nu + H_{\alpha}(\pi) +H_{\beta}(\pi) \leq 0\]
what means that
\[\int \varphi\, d\mu +\int \psi\, d\nu \geq  \int c\, d\pi  + H_{\alpha}(\pi) +H_{\beta}(\pi).\]
Finally, we conclude that
\[ \inf_{P_{\alpha,\beta}(c-\varphi-\psi)= 0} \int_X \varphi \, d\mu + \int_Y \psi\,d\nu \geq \sup_{\pi \in \Pi(\mu,\nu,\tau)} \int  c  \, d\pi\, +\, H_\alpha(\pi)+H_\beta(\pi).\]
\end{proof}

\noindent
\begin{remark} This proof can be extended to a general case where for each $i=1,2,...,n$ we have a contractible IFS $\{\tau_x:Z_i\to Z_i,\, x\in X_i\}$.
In this case, following analogous definitions, the above theorem can be stated as
\[\inf_{[P_{\alpha_1,...,\alpha_n}(c-(\varphi_1+...+\varphi_n))= 0]} \sum_{i=1}^n \int_{X_i} \varphi_i \, d\mu_i = \sup_{\pi \in \Pi(\mu_1,...,\mu_n,\tau)} \int  c  \, d\pi\, +\sum_{i=1}^{n}\, H_{\alpha_i}(\pi).\]
\end{remark}
\bigskip

\noindent
\textbf{ Proof of Proposition \ref{varpri2}.} \newline
It follows similar arguments defining  $\Xi: E\longrightarrow \mathbb{R}\cup\{+\infty \}$ from
$$
\Xi(u)=\left\{\begin{array}{ll}
\lambda &  \mbox{if} \ u=\lambda-g(\tau_x(z))+g(z)-f(\tau_y(w))+f(w), \\
&\mbox{where} \ \text{the functions are continuous}\,,\\
\\
+\infty,&  \mbox{otherwise}.
\end{array}\right.
$$

In this case

$$
\Xi^*(\pi)=\left\{\begin{array}{l}
 \displaystyle{0,  \, \mbox{if} \,  \pi \,\text{satisfy}\, (\ref{biholonomic}) }
\\ \\
+\infty, \, \mbox{otherwise}.
\end{array}\right.
$$

If $\Theta^{*}(-\pi)<+\infty$ and $\Xi^{*}(\pi)<+\infty$ then $\pi \in   \Pi(\cdot, \cdot,\tau)$. Let
$\Phi$ be the set of numbers $\lambda$ such that
$$ c(x,y,z,w) -\lambda -g(\tau_x(z))+g(z) - f(\tau_y(w)) +f(w) \leq b(x,z)+d(y,w)$$
for some continuous functions $f,g,b,d$ with $P_{\alpha}(b)=P_{\beta}(d) = 0$.

From (\ref{rockafeller}) we get
\[\inf_{\lambda \in\Phi} \lambda
= \sup_{\pi \in \Pi(\cdot,\cdot,\tau)} \int  c  \, d\pi\, +\, H_\alpha(\pi)+H_\beta(\pi).
\]
In order to finish the proof note that in the inequality
$$c(x,y,z,w) -\lambda -g(\tau_x(z))+g(z) - f(\tau_y(w)) +f(w) \leq b(x,z)+d(y,w)$$
we have
$P_\alpha(b(x,z)+g(\tau_x(z))-g(z))=0$ and $P_\beta(d(y,w)+f(\tau_y(w))-f(w))=0$.
\qed

\bigskip

The next result is related with the zero temperature case in Spin Lattice Systems (when the temperature is dropped) and with questions in ergodic optimization (see \cite{GL}). This result corresponds to the Kantorovich Duality for compact spaces and continuous cost function $-c$ if $Z$ and $W$ have only one element.

\begin{theorem}\label{kantorovich}
Let $\Phi_c$ be the set of continuous functions $\varphi(x)$, $\psi(y)$ satisfying
$$ c(x,y,z,w)  +g(\tau_x(z))-g(z) + f(\tau_y(w)) -f(w) \leq \varphi(x)+\psi(z)$$
for some functions $f\in C(W) $ and $g\in C(Z)$. Then
\[ \inf_{(\varphi,\psi) \in \Phi_c} \int \varphi(x)\,d\mu +\int \psi(y)\,d\nu = \sup_{\pi\in\Pi(\mu,\nu,\tau)} \int c\, d\pi.\]
\end{theorem}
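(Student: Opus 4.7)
The plan is to apply the Fenchel--Rockafellar Duality (Theorem \ref{FR}) in the same spirit as in the proof of Theorem \ref{varprindual}, but with $\Theta$ replaced by its zero-temperature analogue so that the ``pressure $= 0$'' constraint collapses to the pointwise inequality $u \geq c$. Working in $E = C(X \times Y \times Z \times W)$ and assuming (by adding a constant) that $c \leq 0$, I would set
\[\Theta(u) = \begin{cases} 0, & u(x,y,z,w) \geq c(x,y,z,w) \text{ for all } (x,y,z,w), \\ +\infty, & \text{otherwise,} \end{cases}\]
and keep $\Xi$ exactly as in the proof of Theorem \ref{varprindual}, i.e.
\[\Xi(u) = \begin{cases} \int \varphi\, d\mu + \int \psi\, d\nu, & u = \varphi(x) + \psi(y) - g(\tau_x z) + g(z) - f(\tau_y w) + f(w), \\ +\infty, & \text{otherwise.} \end{cases}\]

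Next I would verify the hypotheses of Theorem \ref{FR}: both $\Theta$ and $\Xi$ are convex, and taking $u_0$ a sufficiently large constant (with $\varphi, \psi$ constants summing to $u_0$ and $f = g = 0$) shows $\Theta(u_0), \Xi(u_0) < \infty$ and $\Theta$ is continuous at $u_0$ since any $u$ with $\|u - u_0\|_\infty$ small also satisfies $u \geq c$. The computation of $\Xi^{*}$ is literally the same as in the proof of Theorem \ref{varprindual}, giving $\Xi^{*}(\pi) = 0$ when $\pi \in \Pi(\mu, \nu, \tau)$ and $+\infty$ otherwise. For $\Theta^{*}$, I would compute
\[\Theta^{*}(-\pi) = \sup_{u \in E}\{\langle -\pi, u\rangle : u \geq c\},\]
observing that if $\pi \in M^{+}$ the supremum is attained by $u = c$, giving $-\int c\, d\pi$, while if $\pi \notin M^{+}$ we can find $u \leq 0$ with $\langle \pi, u\rangle > 0$, hence (using $c \leq 0$) also $u \leq -c + 0 \leq -c$ after scaling... actually the clean argument is: if $\pi \notin M^+$ there is $v \geq 0$ with $\langle \pi, v\rangle < 0$; then $u = c + \lambda v \geq c$ for all $\lambda \geq 0$ and $\langle -\pi, u\rangle = -\int c\, d\pi - \lambda \langle \pi, v\rangle \to +\infty$.

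Applying Theorem \ref{FR} then yields
\[\inf_{u \in E}[\Theta(u) + \Xi(u)] = \sup_{\pi \in E^{*}}[-\Theta^{*}(-\pi) - \Xi^{*}(\pi)] = \sup_{\pi \in \Pi(\mu,\nu,\tau)} \int c\, d\pi,\]
and the left-hand side is by construction $\inf_{(\varphi, \psi) \in \Phi_c}\{\int \varphi\, d\mu + \int \psi\, d\nu\}$. The main obstacle I anticipate is the identification of $\Theta^{*}$ outside $M^{+}$: one must be careful that the escape direction $v$ lies in $E$ (it does, by Hahn--Banach/Riesz) and that adding $\lambda v$ preserves the constraint $u \geq c$, which is immediate since $v \geq 0$. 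Everything else is parallel to the proof of Theorem \ref{varprindual}, so the rest of the write-up is routine.
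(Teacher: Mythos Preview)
Your proposal is correct and follows exactly the approach the paper itself takes: the paper's proof of Theorem~\ref{kantorovich} simply says to repeat the Fenchel--Rockafellar argument from Theorem~\ref{varprindual} with the same $\Xi$ and with $\Theta(u)=0$ when $u\geq c$ pointwise (and $+\infty$ otherwise). Your verification of the hypotheses and your computation of $\Theta^{*}(-\pi)$ (including the escape argument when $\pi\notin M^{+}$) are in fact more detailed than what the paper writes out.
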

\begin{proof}
The proof follows the same reasoning presented in the previous theorem, choosing
$$\Theta(u)=\left\{\begin{array}{ll}
0,&  \mbox{if} \ u(x,z,y,w)\geq c(x,y,z,w), \; \forall(x,y,z,w)\\
\\

+\infty,&  \mbox{otherwise}
\end{array}\right.
$$
and
$$
\Xi(u)=\left\{\begin{array}{ll}
\int_X\varphi \,d\mu + \int_Y \psi\, d\nu,&  \mbox{if} \ u=\varphi(x)+\psi(y)-g(\tau_x(z))+g(z)-f(\tau_y(w))+f(w), \\
&\mbox{where} \ \text{the functions are continuous}\,,\\
\\
+\infty,&  \mbox{otherwise}.
\end{array}\right.
$$
\end{proof}

If we suppose in Theorem~\ref{varprindual} that $Y=\{y_0\}$ and $W=\{w_0\}$ we obtain the next result.

\begin{corollary}
For a fixed $\mu \in \mathcal{P}(X)$ and $c(x,z)$ continuous  we have
\begin{equation}\label{dual}
\inf_{P_\alpha(c-\varphi(x))=0}\int \varphi\, d\mu  = \sup_{\pi\in\Pi(\mu,\tau)} \int c\, d\pi + H_\alpha(\pi).
\end{equation}
\end{corollary}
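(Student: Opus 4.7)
The approach is to directly specialize Theorem \ref{varprindual} to the degenerate situation in which $Y$ and $W$ are singletons, and then absorb the free additive constant coming from $\psi$. So the first step is to set $Y=\{y_0\}$, $W=\{w_0\}$, $\beta=\delta_{y_0}$ and $\nu=\delta_{y_0}$, and check that every object in the theorem collapses cleanly.

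Since $C(W)\cong\mathbb{R}$, the transfer operator $L_{d,\beta}$ acts on $C(W)$ by multiplication by $e^{d(y_0,w_0)}$, so a cost $d$ is normalized only when $d\equiv 0$; from the definition $H_\beta(\pi)=-\sup_{L_{d,\beta}1=1}\int d\,d\pi$ one reads off $H_\beta(\pi)=0$ for every admissible $\pi$. Moreover, any continuous $\psi:Y\to\mathbb{R}$ is just a constant $\psi_0\in\mathbb{R}$, and probabilities on $X\times Y\times Z\times W$ are in canonical bijection with probabilities on $X\times Z$, identifying $\Pi(\cdot,\cdot,\tau)$ with $\Pi(\tau)$, $\Pi(\mu,\nu,\tau)$ with $\Pi(\mu,\tau)$, and $P^m$ with $P_\alpha$. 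Under these identifications, the right-hand side of Theorem \ref{varprindual} is already $\sup_{\pi\in\Pi(\mu,\tau)}\int c\,d\pi+H_\alpha(\pi)$, while the left-hand side becomes
\[
\inf\Bigl\{\int\varphi\,d\mu+\psi_0\;:\;P_\alpha(c-\varphi)-\psi_0=0\Bigr\}.
\]

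The only remaining step is to eliminate $\psi_0$. Using the elementary identity $P_\alpha(c+k)=P_\alpha(c)+k$ for $k\in\mathbb{R}$ (immediate from the definition, since $\pi$ is a probability and the entropy is unaffected by constants), the constraint forces $\psi_0=P_\alpha(c-\varphi)$, so the infimum equals $\inf_\varphi\bigl[\int\varphi\,d\mu+P_\alpha(c-\varphi)\bigr]$. Substituting $\tilde\varphi:=\varphi-P_\alpha(c-\varphi)$ preserves the value of $\int\varphi\,d\mu+P_\alpha(c-\varphi)$ and yields $P_\alpha(c-\tilde\varphi)=0$, showing that the infimum can be restricted to $\{\varphi:P_\alpha(c-\varphi)=0\}$ without loss. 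This gives exactly (\ref{dual}). No real obstacle is anticipated; the only delicate point is the bookkeeping that confirms $H_\beta\equiv 0$ and that $P^m$ reduces to $P_\alpha$, so that the degenerate specialization of the duality is genuinely the claimed one.
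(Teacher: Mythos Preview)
Your approach is exactly the paper's: the corollary is stated immediately after the remark ``If we suppose in Theorem~\ref{varprindual} that $Y=\{y_0\}$ and $W=\{w_0\}$ we obtain the following result'', and no further proof is given. You have simply written out the bookkeeping that the paper leaves implicit, and it is all correct---$H_\beta\equiv 0$, $P^m$ collapses to $P_\alpha$, and $\psi$ becomes a free constant to be absorbed.

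One small slip: in the final substitution you want $\tilde\varphi:=\varphi+P_\alpha(c-\varphi)$, not $\varphi-P_\alpha(c-\varphi)$. With your sign one gets $P_\alpha(c-\tilde\varphi)=2P_\alpha(c-\varphi)$ rather than $0$. Your Claim that the value $\int\varphi\,d\mu+P_\alpha(c-\varphi)$ is preserved is correct either way (adding any constant to $\varphi$ leaves that expression unchanged), so the argument goes through once the sign is fixed.
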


In some sense the concept of eigenvalue was changed in this result. For the purpose of the above result we can try to think that the eigenvalue is a function on the $x$ variable.
The equation $L(h)=\lambda h$ should be changed for the existence of functions $h(z)$ and $\varphi(x)$ such that $L(h)=\varphi \cdot h$. But the left hand side is a function of the variable $z$ and the right hand side is a function (or product of functions) of the variables $x$ and $z$. We return to the original equation $L(h)=\lambda h$ and rewrite this in the form $L(\frac{h}{\lambda})=h$. In this way we can try to find functions $\varphi(x)$ and $h(z)$ such that
\[\int e^{c(x,z)-\varphi(x)}h(\tau_x(z)) = h(z).\]
We observe  that there exist too many pairs of solutions. Indeed, for each fixed $\varphi(x)$ we can apply the Lemma~\ref{exist} for $L_{c-\varphi}$ determining $\lambda>0$ and $h>0$ satisfying $L_{c-\varphi}h=\lambda\cdot h$. This can be rewritten in the form
 \[\int e^{c(x,z)-\varphi(x)-\log(\lambda)}h(\tau_x(z)) = h(z).\]
Hence, there is a function $\hat \varphi(x) = \varphi(x)+\log(\lambda)$ and a function $h>0$ such that
 \[\int e^{c(x,z)-\hat \varphi(x)}h(\tau_x(z)) = h(z).\]
Thus any function $\varphi(x)$ plays the role of an ``eigenvalue'' except by the addition of a constant. For a given cost function $c$ there exist too many ways of get a normalization by adding a function $\varphi(x)$ and a function in the form $g(z)-g(\tau_x(z))$. In the previous section we make the normalization adding a constant and not a function $\varphi(x)$.

The next result can be interpreted as a kind of slackness condition in the present setting.

\begin{proposition} Let $c(x,z)$ and $\varphi(x)$ be Lipschitz functions and $\pi \in \Pi(\mu,\tau)$. If $P(c-\varphi)=0$ and $\pi$ is the holonomic probability associated to $c-\varphi$, then $\varphi$ and $\pi$ realize the infimum and the supremum in (\ref{dual}).
\end{proposition}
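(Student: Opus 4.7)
The plan is to verify that the pair $(\varphi,\pi)$ produces the same numerical value on the two sides of (\ref{dual}), and then invoke the duality equality (\ref{dual}) itself to conclude that both the infimum and the supremum are attained by this pair.

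First I would unpack what it means for $\pi$ to be the holonomic probability associated to $c-\varphi$. By the convention fixed right after Lemma~\ref{exist}, this $\pi$ is the holonomic probability associated to the normalized cost
\[
\overline{c-\varphi} \;=\; (c-\varphi) + \log(h\circ\tau_x) - \log(h) - \log(\lambda),
\]
where $\lambda>0$ and $h>0$ are the eigendata of $L_{c-\varphi,\alpha}$ supplied by Lemma~\ref{exist}. The hypothesis $P(c-\varphi)=0$ combined with Theorem~\ref{varprin} forces $\log(\lambda)=0$, so the normalization reduces to $\overline{c-\varphi}=(c-\varphi)+\log(h\circ\tau_x)-\log(h)$.

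Next I would apply the corollary just after Lemma~\ref{lema entropia} to get $H_\alpha(\pi)=-\int\overline{c-\varphi}\,d\pi$. Because $\pi$ is holonomic in the $(X,Z)$-coordinates, the coboundary integrates to zero, $\int[\log(h\circ\tau_x)-\log(h)]\,d\pi=0$; and because $\pi\in\Pi(\mu,\tau)$ has $X$-marginal $\mu$, we also have $\int\varphi\,d\pi=\int\varphi\,d\mu$. Substituting,
\[
\int c\,d\pi + H_\alpha(\pi) \;=\; \int c\,d\pi - \int (c-\varphi)\,d\pi \;=\; \int \varphi\,d\mu.
\]

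Finally I invoke the duality (\ref{dual}): its two sides have a common value $V$. The candidate $\varphi$ satisfies the constraint $P(c-\varphi)=0$ and so witnesses $V \le \int\varphi\,d\mu$, while the candidate $\pi\in\Pi(\mu,\tau)$ witnesses $V \ge \int c\,d\pi + H_\alpha(\pi)$. The displayed equality makes both witnesses coincide with $V$, so $\varphi$ realizes the infimum and $\pi$ realizes the supremum. There is no genuine obstacle in the argument; the only care required is to notice that ``associated to $c-\varphi$'' refers to the normalization $\overline{c-\varphi}$, and then to use holonomicity to discard the coboundary before the entropy formula simplifies to $-\int(c-\varphi)\,d\pi$.
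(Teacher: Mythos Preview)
Your argument is correct and follows essentially the same route as the paper: you establish $\int c\,d\pi + H_\alpha(\pi)=\int\varphi\,d\mu$ and then use the duality (\ref{dual}) to conclude that both feasible candidates are optimal. The only difference is cosmetic: the paper obtains $0=P(c-\varphi)=\int(c-\varphi)\,d\pi+H(\pi)$ in one stroke by citing Theorem~\ref{varprin}, whereas you unpack that theorem's content via the corollary to Lemma~\ref{lema entropia} and an explicit cancellation of the coboundary.
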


\begin{proof} We know that
\[0 = P(c-\varphi) = \int c-\varphi\, d\pi +H(\pi).\]
Then
\[\int \varphi\, d\mu =\int c\, d\pi +H(\pi).\]
This is possible only if $\varphi$ realizes the infimum and $\pi$ realizes the supremum in (\ref{dual}).
\end{proof}

\vspace{0.9cm}
\end{document}